\newtheorem{Theorem}{Theorem}[section]
\newtheorem{Lemma}[Theorem]{Lemma}
\newtheorem{Definition}[Theorem]{Definition}
\newtheorem{Corollary}[Theorem]{Corollary}
\newtheorem{Proposition}[Theorem]{Proposition}
\newtheorem{Example}[Theorem]{Example}
\newtheorem{Remark}[Theorem]{Remark}
\newtheorem{Conjecture}[Theorem]{Conjecture}
\title [On Exchange Spectra of Valued Cluster Quivers]{On Exchange Spectra of Valued Cluster Quivers \\and Cluster Algebras}
\author{Fang Li $\;\;\;\;\;\;$ Siyang Liu $\;\;\;\;\;\;$}
\address{Fang Li
\newline Department of Mathematics, Zhejiang University (Yuquan Campus), Hangzhou, Zhejiang
310027,  P.R.China}
\email{fangli@zju.edu.cn}
\address{Siyang Liu
\newline Department
of Mathematics, Zhejiang University (Yuquan Campus), Hangzhou, Zhejiang
310027, P.R.China}
\email{siyangliu@zju.edu.cn}
\date{version of \today}
\newcommand{\lra}{\longrightarrow}
\newcommand{\ra}{\rightarrow}
\newcommand{\sdp}{\times\kern-.2em\vrule height1.1ex depth-.05ex}
\newcommand{\epi}{\lra \kern-.8em\ra}
\begin{document}

\renewcommand{\thefootnote}{\alph{footnote}}
\maketitle
\bigskip
\begin{abstract}

Inspirited by the importance of the spectral theory of graphs, we introduce the spectral theory of valued cluster quiver of a cluster algebra. Our aim is to characterize a cluster algebra via its spectrum so as to use the spectral theory as a tool.

First, we give the relations between exchange spectrum of a valued cluster quiver and adjacency spectrum of its underlying valued graph, and between exchange spectra of a valued cluster quiver and its full valued subquivers. The key point is to find some invariants from the spectrum theory under mutations of cluster algebras, which is the second part we discuss. We give a sufficient and necessary condition for a cluster quiver and its mutation to be cospectral. Following this discussion, the so-called cospectral subalgebra of a cluster algebra is introduced.
We study bounds of exchange spectrum radii of cluster quivers and give a characterization of $2$-maximal cluster quivers via the classification of oriented graphs of its mutation equivalence. Finally, as an application of this result, we obtain that the preprojective algebra of a cluster quiver of Dynkin type is representation-finite if and only if the cluster quiver is $2$-maximal.

\end{abstract}
\section{Introduction}
Cluster algebras were invented by Fomin and Zelevinsky in a series of papers \cite{FZ1,FZ2,BFZ,FZ4} and are thought to be a spectacular advance in mathematics. There are many relations and applications between cluster algebras and other important subjects, such as representations of quivers, combinatorics and quiver gauge theories.

In the theory of cluster algebras, two of vital roles are exchange matrices and mutation of them. Exchange matrices are assumed to be totally sign-skew-symmetric matrices introduced by Fomin and Zelevinsky in \cite{FZ1}. An important class of totally sign-skew-symmetric matrices consists of integer skew-symmetrizable matrices and they can be associated one-to-one correspondence with valued cluster quivers, which are simple oriented graphs without loops together with a pair of integers $(v(\alpha)_1,v(\alpha)_2)$ for each arrow $\alpha$ satisfying some rules. Skew-symmetrizable matrices and their mutations play very important roles in the study of cluster algebras, and many conjectures and problems are usually worked out firstly in this case. In \cite{FZ4}, Fomin and Zelevinsky conjectured that the exchange graph of a cluster algebra depends only on its initial exchange matrix, and it had been proved in the case of skew-symmetrizable matrices. So it is meaningful to study skew-symmetrizable matrices, their corresponding valued cluster quivers and mutation of them. Valued cluster quivers can be just regarded as valued oriented graphs and spectral graph theory is one of the major method to study properties of graphs.

In general, to reveal the properties of a graph, we usually associate a graph with some matrices and study these matrices via algebraic methods. In contrast, we can also make use of graph theory to study the properties of some matrices and transformations of matrices. Moreover, the spectral graph theory also has universal applications in many areas, such as information science, computer science, and communications. The most common matrices associated to (oriented) graphs are adjacency matrices and Laplacian matrices. There are lots of literature and results on spectral graph theory, especially for unoriented graphs, see e.g \cite{BH,C2}. But there are not enough attentions on spectral theory of oriented graphs. Recently, in \cite{C1} Chung considered the Laplacians for oriented graphs and studied their spectra, and Bauer introduced normalized Laplacians for weighted oriented graphs and investigated the properties of their spectra in \cite{B}. However, we are more interested in valued cluster quivers and their corresponding skew-symmetrizable matrices, hence we shall develop a novel spectral theory for valued cluster quivers in contrast to the classical spectral graph theory. On one hand, it can be contributed to study exchange matrices and their transformations. On the other hand, it support a new sight to study spectral graph theory.

This article is organized as follows. In section 2, some basic concepts and definitions are given and we characterize acyclicity of valued cluster quivers by its adjacency spectrum. In section 3, we investigate properties of spectra of exchange matrices of valued cluster quivers. Finally, in section 4 we study how mutations influence on exchange spectra of cluster quivers.

\section{Valued cluster quivers and exchange matrices }
\subsection{Definitions and notations}.

We follow \cite{ASS} for most basic concepts of quivers. A {\bf quiver} is an oriented graph described by a 4-tuple $Q = (Q_0,Q_1,s,t)$, where $Q_0$ is a set of {\bf vertices}, $Q_1$ is a set of {\bf arrows}, and $s,t$ are two functions that map each vertex to its {\bf source} and {\bf target}, respectively. We usually labeled the vertices by natural numbers. A path of length $p$ is a sequence of $p$ arrows $\alpha_1\alpha_2\dots \alpha_p$ satisfying that $s(\alpha_{j+1}) = t(\alpha_j),1 \leqslant j \leqslant p-1.$ For a path $\omega=\alpha_1\alpha_2\dots \alpha_p$, define $s(\omega)=s(\alpha_1)$ and $t(\omega)=t(\alpha_p)$.
A quiver $Q$ is said to be {\bf finite} if both $Q_0$ and $Q_1$ are finite sets, write $Q_0=\{1,\cdots,n\}$. In $Q$, if the multiplicities of arrows are at most $1$, then $Q$ is said to be {\bf simply-laced}. A {\bf sink} is a vertex $i \in Q_0$ satisfying that there is no arrow $\alpha \in Q_1$ such that $s(\alpha)=i$ and a {\bf source} is a vertex $j \in Q_0$ satisfying that there is no arrow $\alpha \in Q_1$ such that $t(\alpha)=j$.

  A {\bf full subquiver} $Q'$ of a quiver $Q$ is a quiver $Q' = (Q'_0,Q'_1,s',t')$ satisfying that $Q'_0\subseteq Q_0,  Q'_1\subseteq Q_1,  s' = s|_{Q'_1} , t'=t|_{Q'_1}$, and $Q'_1 = \{\gamma \in Q_1|s(\gamma),t(\gamma)\in Q'_0\}$.

The path algebra $KQ$ of a quiver $Q$ over an algebraically closed field $K$ is the $K$-vector space $KQ$ whose basis consisting of all paths in $Q$ with multiplication $\cdot$ defined on two basis elements $\omega_1$,$\omega_2$ by  \[\omega_1\cdot\omega_2=\begin{cases} \omega_1\omega_2,&\text{if $s(\omega_2)=t(\omega_1)$;}\\0,&\text{otherwise.}\end{cases}\]

The underlying graph of a quiver $Q$ is got by forgetting all orientations of arrows and is denoted by $\bar  Q$. We say a quiver $Q$ to be  {\bf connected} if its underlying graph is connected.

 A {\bf loop} of a quiver is just an arrow $\gamma$ such that $s(\gamma) = t(\gamma)$, and a $k$-cycle of a quiver is a path $\alpha_1\alpha_2\dots \alpha_k$ of length $k$ such that $s(\alpha_1) = t(\alpha_k)$. A chordless $k$-cycle in a quiver is a $k$-cycle such that no two vertices of the cycle are connected by an arrow that does not itself belong to the cycle.

  A {\bf cluster quiver} is a finite quiver without loops or 2-cycles.  For any quiver $Q$, the degree of any vertex is just its degree in the underlying graph $\bar Q$, ~i.~e. the number of edges incident with this vertex in $\bar Q$.

A {\bf valued cluster quiver} $(Q,v)$ is a finite quiver $Q$ without loops and at most one arrow between any pair of vertices, together with a {\bf valuing map} $v : Q_1 \rightarrow \mathbb{N}^2$ satisfying that there is a map $d : Q_0 \rightarrow \mathbb{N}_{>0}$ and for each arrow $\alpha: i\rightarrow j$ in $Q_1$, we have $d(i)v(\alpha)_1 = d(j)v(\alpha)_2$, where the {\bf value} $v(\alpha) = (v(\alpha)_1, v(\alpha)_2)$.

 By a little abuse of notation,  denote a valued cluster quiver $(Q,v)$ only by $Q$ and its underlying valued graph by $\bar{Q}$. For any arrow $\alpha: i\rightarrow j$,  the notation $(v_{ij},v_{ji})$ is used to replace $(v(\alpha)_1,v(\alpha)_2)$.

If $(Q,v)$ is a valued cluster quiver, $(Q',v')$ is called a {\bf full valued subquiver} of $Q$ if $Q'$ is a full subquiver of $Q$ and $v' = v|_{Q'_1}$. Note that $(Q',v')$ is also a valued cluster quiver.

For a valued cluster quiver $(Q,v)$, if $v(\alpha)_1 = 1 = v(\alpha)_2$ for any arrow $\alpha \in Q_1$, then we call $Q$ a {\bf simple cluster quiver}. Dealing with simple cluster quivers, we usually omit the labels.  Trivially,  simple cluster quivers are equivalent to simply-laced cluster quivers. We call $Q$ a {\bf tree cluster quiver} if it is a simple cluster quiver and $\bar Q$ is a tree.

Throughout this paper we mainly consider valued cluster quivers and we use the notation $[x]_+ = \mathrm{max}\{x, 0\}$. Let $M = (m_{ij})_{l\times n}$ be a real matrix, then $[M]_+ = {([m_{ij}]_+})_{l\times n}$ is the non-negative matrix defined component-wisely.

\begin{Definition}Let $(Q,v)$ be a valued cluster quiver with vertices set $Q_0 = \{1, 2, \dots, n\}$.
\begin{enumerate}
\item[(1)] The {\bf exchange matrix} $B(Q)=(b_{ij})_{n\times n}$ of $Q$ is the integer matrix defined by the following rule: For $1\leqslant i, j \leqslant n$, $b_{ij} = v_{ij}$ if there is an arrow $\alpha: i\rightarrow j$, $b_{ij} = -v_{ij}$ if there is an arrow $\beta: j\rightarrow i$ and otherwise $b_{ij} = 0$.
\item[(2)] The matrix $A(Q) = [B(Q)]_+ = ([b_{ij}]_+)_{n\times n}$ is called the {\bf adjacency matrix} of $Q$, and the matrix $C(Q) = [B(Q)]_+ + [-B(Q)]_+$ is called the adjacency matrix of the underlying valued graph $\bar Q$ of $Q$.
\end{enumerate}
\end{Definition}
A square matrix $M$ is {\bf symmetrizable} ({\bf skew-symmetrizable}, resp.) if there exists a diagonal square integer matrix $D$ with positive diagonal entries such that $DM$ is symmetric(skew-symmetric, resp.). Note that exchange matrices of valued cluster quivers are integer skew-symmetrizable matrices. Let $B=(b_{ij})_{n\times n}$ be an integer skew-symmetrizable matrix, we can define a valued cluster quiver $(Q(B),v)$ whose vertices set is $\{1,2,\dots,n\}$ as follows. There is an arrow $\alpha : i\rightarrow j$ in $Q(B)_1$ whenever $b_{ij}>0$ and $v(\alpha) = (|b_{ij}|,|b_{ji}|)$. It is clear that there is a bijective correspondence between integer skew-symmetrizable matrices and valued cluster quivers. In particular, skew-symmetric matrices are skew-symmetrizable. In this case, we can use cluster quivers instead of valued cluster quivers to express integer skew-symmetric matrices. Indeed, if $B=(b_{ij})_{n\times n}$ is an integer skew-symmetric matrix, we can construct a cluster quiver $Q$ such that $Q_0 = \{1,2,\dots,n\}$ and there shall be $b_{ij}$ arrows from $i$ to $j$ whenever $b_{ij}>0$ for any $i,j \in Q_0$. Then there is a bijective correspondence between integer skew-symmetric matrices and cluster quivers. Cluster quivers can be considered as a special case of valued cluster quivers if for any cluster quiver $Q$, we regard the multiplicity $v_{\alpha}$ of each arrow $\alpha$ as its value, that is, let $v(\alpha)_1 = v(\alpha)_2 = v_{\alpha}$ for each arrow $\alpha$. We will emphatically discuss cluster quivers in Section 4.

\begin{Remark}
When we consider skew-symmetric matrices and its corresponding cluster quivers, the adjacency matrix defined above is the same as the definition of the adjacency matrices of (oriented) graphs in \cite{GR,BH}.
\end{Remark}
Since a full valued subquiver of a valued cluster quiver is also a valued cluster quiver, the following relation between adjacency (exchange,resp.) matrices of a valued cluster quiver and its full valued subquivers is obvious.
\begin{Lemma}
Let $Q$ be a valued cluster quiver, then there is a bijection between principal submatrices of $A(Q) (B(Q), \mathrm{resp.})$ and full valued subquivers of $Q$. More precisely, each principal submatrix of $A(Q) (B(Q), \mathrm{resp.})$ is just the adjacency (exchange, resp.) matrix of its corresponding full valued subquiver in $Q$.
\end{Lemma}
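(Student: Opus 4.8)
The plan is to make the claimed bijection explicit and then verify that it intertwines the operation of "taking a full valued subquiver" with the operation of "taking a principal submatrix indexed by the corresponding subset of vertices". Since $Q$ has vertex set $Q_0=\{1,\dots,n\}$, every full valued subquiver $(Q',v')$ is determined, by the definition of full subquiver together with $v'=v|_{Q'_1}$, by its vertex set $Q'_0\subseteq Q_0$; conversely every subset $S\subseteq\{1,\dots,n\}$ arises as the vertex set of a unique full valued subquiver $Q'=Q'_S$. On the matrix side, a principal submatrix of $A(Q)$ (resp.\ $B(Q)$) is by definition the submatrix obtained by deleting the rows and columns outside some index set $S\subseteq\{1,\dots,n\}$; this is also parametrized by subsets $S$. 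So the map $Q'_S\mapsto (\text{principal submatrix of }A(Q)\text{ on }S)$ is the desired correspondence, and both assignments being inverse bijections with the power set $2^{\{1,\dots,n\}}$ gives the first assertion immediately.

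Next I would prove the "more precisely" part, which is the only real content. Fix $S\subseteq\{1,\dots,n\}$, let $Q'=Q'_S$ be the associated full valued subquiver, and write $B(Q)=(b_{ij})$. I claim that for $i,j\in S$ the entry of $B(Q')$ at position $(i,j)$ equals $b_{ij}$. By Definition 2.1(1) applied to $Q'$, the $(i,j)$ entry of $B(Q')$ is $v'_{ij}$ if there is an arrow $i\to j$ in $Q'_1$, is $-v'_{ij}$ if there is an arrow $j\to i$ in $Q'_1$, and is $0$ otherwise. But $Q'_1=\{\gamma\in Q_1\mid s(\gamma),t(\gamma)\in S\}$, so an arrow between $i$ and $j$ lies in $Q'_1$ exactly when it lies in $Q_1$ (both endpoints are automatically in $S$), and in that case $v'=v|_{Q'_1}$ gives $v'_{ij}=v_{ij}$. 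Matching the three cases of the definition for $Q'$ with the three cases for $Q$ shows the $(i,j)$ entry of $B(Q')$ is exactly $b_{ij}$, i.e.\ $B(Q')$ is the principal submatrix $B(Q)_S$. The statement for $A$ then follows by applying $[\,\cdot\,]_+$ entrywise: $[B(Q)_S]_+=([b_{ij}]_+)_{i,j\in S}=A(Q)_S$, and by definition $A(Q')=[B(Q')]_+$.

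Finally I would note that $Q'_S$ is indeed again a valued cluster quiver — this was already remarked in the text after the definition of full valued subquiver, since restricting $d:Q_0\to\mathbb{N}_{>0}$ to $S$ still witnesses the compatibility $d(i)v'_{ij}=d(j)v'_{ji}$ on arrows of $Q'$ — so that $A(Q')$ and $B(Q')$ are legitimately defined and the statement is not vacuous. There is essentially no obstacle here: the lemma is a bookkeeping statement, and the only point requiring care is checking that the defining condition $Q'_1=\{\gamma\in Q_1\mid s(\gamma),t(\gamma)\in S\}$ of a \emph{full} subquiver is exactly what makes the arrow-by-arrow comparison go through (a non-full subquiver would fail it), together with the bijectivity of $S\leftrightarrow Q'_S$ which uses that the vertices are labeled $\{1,\dots,n\}$.
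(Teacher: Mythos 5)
Your proposal is correct. The paper gives no proof at all for this Lemma --- it is stated as ``obvious'' immediately after the remark that a full valued subquiver is again a valued cluster quiver --- and your write-up is exactly the bookkeeping verification the authors intend: parametrize both sides by subsets $S\subseteq Q_0$, use fullness ($Q'_1=\{\gamma\in Q_1\mid s(\gamma),t(\gamma)\in S\}$) and $v'=v|_{Q'_1}$ for the entrywise match of $B$, and pass to $A$ via $[\,\cdot\,]_+$.
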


Clearly, for a valued cluster quiver $Q$, $B(Q)$ is an integer skew-symmetrizable matrices and $C(Q)$ is an integer symmetrizable matrix with respect to the same positive definite diagonal matrix. The spectrum of $A(Q)$ ($B(Q)$, resp.) is called the {\bf adjacency} ({\bf exchange}, resp.) {\bf spectrum} of $Q$, and the characteristic polynomial of $A(Q)$ ($B(Q)$, resp.) is called the {\bf adjacency} ({\bf exchange}, resp.) {\bf polynomial} of $Q$. We usually denote the exchange spectrum of $Q$ by \[\mathrm{Spec}(B(Q)) = \begin{bmatrix} \lambda_1&\lambda_2&\dots&\lambda_m\\n_1&n_2&\dots&n_m \end{bmatrix}, \]where $\lambda_1, \lambda_2, \dots, \lambda_m$ are all distinct eigenvalues of the matrix $B(Q)$ such that $|\lambda_1| < |\lambda_2| < \dots < |\lambda_m|$, and $n_1$, $n_2$, $\dots$, $n_m$ are the corresponding multiplicities of them. And $|\lambda_m|$ is called the exchange spectrum radius of the valued cluster quiver $Q$ and denoted by $\mathrm{Radi}(Q)=|\lambda_m|$.

Let $Q$ be a valued cluster quiver and $Q'_1$, $Q'_2$, $\dots$, and $Q'_s$ be its connected components. Suppose that $B$, $B_1$, $B_2$, $\dots$, and $B_s$ are exchange matrices of $Q$, $Q'_1$, $Q'_2$, $\dots$, and $Q'_s$, respectively. Then it is clear that there exists a permutation matrix $P$ such that \[ PBP^T = \begin{bmatrix} B_1&&&\\&B_2&&\\&&\ddots&\\&&&B_s \end{bmatrix}\]Thus it is easy to see that \[\mathrm{Spec}(B) = \bigcup_{i=1}^s\mathrm{Spec}(B_i), \]\[\mathrm{Radi}(Q) = \mathrm{max}\{ \mathrm{Radi}(Q'_k),k=1,2,\dots,m\}.\] So in general, we can assume that $Q$ is connected.

For an integer skew-symmetrizable matrix $B = (b_{ij})_{n\times n}$ and any $k\in [1,n]$, let $\mu_k(B) = (b'_{ij})_{n\times n}$ be obtained by mutating $B$ at $k$, then $\mu_k(B)$ is defined by the following formula\[b'_{ij}=\begin{cases} -b_{ij},&\text{if $i=k$ or $j=k$;}\\b_{ij} + \mathrm{sgn}(b_{ik})\mathrm{max}\{b_{ik}b_{kj},0\},&\text{otherwise.}\end{cases}\]

Note that $\mu_k(B)$ is still an integer skew-symmetrizable matrix. The corresponding mutation of valued cluster quivers can be defined as follows.
\begin{Definition}
Let $(Q,v)$ be a valued cluster quiver with vertices set $Q_0 = \{1,2,\dots,n\}$ and $k\in Q_0$ be a fixed vertex. The mutation $(Q',v')=\mu_k(Q,v)$ of $(Q,v)$ at $k$ is defined as follows:
\begin{enumerate}
\item[(1)] For every 2-paths $i\xrightarrow[\alpha]{(v_{ik},v_{ki})} k\xrightarrow[\beta]{(v_{kj},v_{jk})} j$,
\begin{enumerate}
\item[(i)] if there exists an arrow $j\xrightarrow[\gamma]{(v_{ji},v_{ij})} i$, keep this arrow and $v'(\gamma)=(v_{ji}-v_{jk}v_{ki},v_{ij}-v_{ik}v_{kj})$ if $v_{ij}>v_{ik}v_{kj}$; delete this arrow if $v_{ij}=v_{ik}v_{kj}$; delete this arrow, then add a new arrow $\gamma': i\rightarrow j$ and $v'(\gamma')=(v_{ik}v_{kj}-v_{ij},v_{jk}v_{ki}-v_{ji})$ if $v_{ij}<v_{ik}v_{kj}$;
\item[(ii)] if there exists an arrow $i\xrightarrow[\gamma]{(v_{ij},v_{ji})} j$, keep this arrow and $v'(\gamma)=(v_{ik}v_{kj}+v_{ij},v_{jk}v_{ki}+v_{ji})$;
\item[(iii)] if there are not any arrows between $i$ and $j$, just add an arrow $\epsilon: i\rightarrow j$ and $v'(\epsilon)=(v_{ik}v_{kj},v_{jk}v_{ki})$.
\end{enumerate}
\item[(2)] Reverse all arrows incident with $k$, and $v'(\alpha^{op})=(v(\alpha)_2,v(\alpha)_1)$ for any arrow $\alpha$ incident with $k$, where $\alpha^{op}$ is the opposite arrow of $\alpha$;
\item[(3)] Keep other arrows and values unchanged.
\end{enumerate}
\end{Definition}
 It is obvious that $v_{ij}=|b_{ij}|$ when $b_{ij}\neq 0$. Furthermore, if there is an arrow from $i$ to $j$, then $v_{ij}=b_{ij}$ and $v_{ji}=-b_{ji}$. Let $\mu_k(B) = (b'_{ij})_{n\times n}$, then $b'_{ij}=b_{ij}+\mathrm{sgn}(b_{ik})[b_{ik}b_{kj}]_+$ for $i,j\neq k$. Therefore $b'_{ij}\neq b_{ij}$ if and only if $b_{ik}>0,b_{kj}>0$ or $b_{ik}<0,b_{kj}<0$ whenever $i,j\neq k$. It can be seen that $$B(\mu_k(Q,v))=\mu_k(B(Q,v)).$$

 Also, for either matrices or valued cluster quivers, the {\bf mutation map} $\mu_k$ is always an involution, that is, $\mu_k\mu_k(B) = B$ and $\mu_k\mu_k(Q,v) = (Q,v)$.

Mutations of an integer skew-symmetrizable matrix can be written in matrix form, see \cite{BFZ}. In particular, mutations of an integer skew-symmetric matrix is the same as a congruent transformation. Indeed,
let $B_{n\times n}$ be an integer skew-symmetric matrix and $W = (w_{ij})_{n\times n}$ be a matrix of the following form
\begin{equation}\label{pmatrix}
W= \begin{bmatrix}
  I_{k-1}&\xi&\mathbf{0}\\
 \mathbf{0}&-1&\mathbf{0}\\
 \mathbf{0}&\eta&I_{n-k}
  \end{bmatrix}  \end{equation}
satisfying that $\xi=([b_{1,k}]_+,[b_{2,k}]_+,\dots,[b_{k-1,k}]_+)^T$ and $\eta=([b_{k+1,k}]_+,[b_{k+2,k}]_+,\dots,[b_{n,k}]_+)^T$, where $I_m$ denotes the identity matrix of order $m$. It is easy to check that $det(W) = -1$ and $\mu_k(B) = WBW^T$.

Two integer skew-symmetrizable matrices (respectively, valued cluster quivers) are said to be {\bf mutation equivalent} if one can be obtained by a sequence of mutations of the other. It is easy to see that this defines a equivalence relation. The {\bf mutation class} of $Q$ consists of all valued cluster quivers mutation equivalent to $Q$ and is usually denoted by $Mut(Q)$. We use the notation $Q\sim Q'$($B\sim B'$, resp.) to denote that $Q$ and $Q'$ ($B$ and $B'$, resp.) are mutation equivalent.

Let $\mathbb{P}$ be a semifield which is an abelian multiplicative group endowed with an auxiliary addition $\oplus$ which is associative, commutative, and distributive with respect to the multiplication in $\mathbb{P}$. Let $F$ be a field which is isomorphic to the field of rational functions in $n$ indeterminates with the coefficients from the field of fractions of $\mathbb{ZP}$. Following \cite{FZ4}, a seed is a triple $\Sigma=(\mathbf{x},\mathbf{y},B)$ such that $B = (b_{ij})_{n\times n}$ is an integer skew-symmetrizable matrix, $\mathbf{y} = (y_1, \dots, y_n)$ is an n-tuple of elements of $\mathbb{P}$, and $\mathbf{x} = (x_1, \dots, x_n)$ is an n-tuple of a free generating set of $F$. For $k\in [1,n]$, $(\mathbf{x'},\mathbf{y'},B') = \mu_k(\mathbf{x},\mathbf{y},B)$ is obtained by the following rules:
\begin{enumerate}
\item[(1)] $\mathbf{x'} = (x'_1, \dots, x'_n)$ is given by $x'_kx_k = \frac{y_k\prod x_i^{[b_{ik}]_+} + \prod x_i^{[-b_{ik}]_+}}{y_k \oplus 1}$ and $x'_i = x_i$ for $i \neq k$;
\item[(2)] $\mathbf{y'} = (y'_1, \dots, y'_n)$ is given by $y'_i = y_k^{-1}$ for $i=k$; and otherwise $y'_i = y_iy_k^{[b_{ki}]_+}(y_k \oplus 1)^{-b_{ki}}$;
\item[(3)] $B' = \mu_k(B)$.
\end{enumerate}
Note that we can also use $(\mathbf{x},\mathbf{y},Q(B))$ instead of $(\mathbf{x},\mathbf{y},B)$. For every seed $(\tilde{\mathbf{x}},\tilde{\mathbf{y}},\tilde{B})$ obtained from the seed $\Sigma=(\mathbf{x},\mathbf{y},B)$ by a sequence of mutations, we call $\tilde{\mathbf{x}}$ a {\bf cluster} and its elements are called {\bf cluster variables}. The {\bf (rooted) cluster algebra} $\mathcal{A}(\Sigma)$ of rank $n$ associated to a seed $\Sigma=(\mathbf{x},\mathbf{y},B)$ is the $\mathbb{ZP}$-subalgebra of $F$ generated by all cluster variables.

\subsection{ A characterization of acyclic valued cluster quivers}.

 A valued cluster quiver $Q$ is called {\bf acyclic}, if it has no $k$-cycles in $Q$ for any $k \geqslant 1$. The fact whether a valued cluster quiver is acyclic will be influential for the corresponding cluster algebra. Indeed, some important conjectures were proved to be true in the case cluster algebras have acyclic valued cluster quivers; otherwise, however, they would face great difficult for affirmation. In this section, we give an criterion of acyclicity. The following lemma is easy to see:
\begin{Lemma}
Let $Q$ be a valued cluster quiver and $A=A(Q)$ be its adjacency matrix. Suppose $\pi$ is a permutation of $\{1, 2, \dots, n\}$, and $A'=(a'_{ij})$, where $a'_{ij} = a_{\pi(i)\pi(j)}$. If $P$ is the corresponding permutation matrix of $\pi$, then $PAP^T=A'$.
In particular, $det(A)=det(A')$.
\end{Lemma}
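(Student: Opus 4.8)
The plan is simply to unwind the definition of the permutation matrix. First I would fix the convention: let $P=(p_{ij})_{n\times n}$ be the permutation matrix of $\pi$, so that $p_{ij}=1$ when $j=\pi(i)$ and $p_{ij}=0$ otherwise (this is the convention already used earlier in this section, where $PBP^T$ was put into block-diagonal form indexed by the connected components of $Q$). Then $P^T=(p^T_{ij})$ with $p^T_{lj}=p_{jl}$, and $P$ is orthogonal with $\det P=\mathrm{sgn}(\pi)=\pm 1$.

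Next I would compute the $(i,j)$-entry of $PAP^T$ directly. Expanding the two matrix products gives
\[ (PAP^T)_{ij}=\sum_{k=1}^n\sum_{l=1}^n p_{ik}\,a_{kl}\,p^T_{lj}=\sum_{k=1}^n\sum_{l=1}^n p_{ik}\,a_{kl}\,p_{jl}. \]
Since $p_{ik}\neq 0$ forces $k=\pi(i)$ and $p_{jl}\neq 0$ forces $l=\pi(j)$, the double sum collapses to the single surviving term $a_{\pi(i)\pi(j)}$, which by definition equals $a'_{ij}$. Hence $PAP^T=A'$.

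Finally, for the determinant assertion I would invoke multiplicativity of $\det$ together with $\det P^T=\det P$ and $(\det P)^2=1$, so that $\det(A')=\det(PAP^T)=(\det P)^2\det(A)=\det(A)$. There is essentially no obstacle here; the only point requiring a little care is keeping the convention for $P$ versus $P^T$ consistent, so that the relabelled indices come out as $\pi(i),\pi(j)$ rather than $\pi^{-1}(i),\pi^{-1}(j)$. The same argument applies verbatim with $A=A(Q)$ replaced by $B(Q)$, which is the form in which the statement will be used in Section~3.
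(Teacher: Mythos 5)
Your proof is correct; the paper states this lemma without proof (``The following lemma is easy to see''), and your direct entrywise computation of $(PAP^T)_{ij}$ together with multiplicativity of the determinant is exactly the standard argument the paper implicitly relies on. Your remark about fixing the convention for $P$ versus $P^T$ (so the relabelled indices come out as $\pi(i),\pi(j)$ rather than $\pi^{-1}(i),\pi^{-1}(j)$) is the only subtlety, and you handle it correctly.
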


\begin{Proposition}\label{acyclic}
Let $Q$ be a valued cluster quiver. Then the following statements are equivalent:
\begin{enumerate}
\item[(i)] $Q$ is acyclic.
\item[(ii)] The principal minors of $A(Q)$ are zeros.
\item[(iii)] The eigenvalues of $A(Q)$ are zeros.
\end{enumerate}
\end{Proposition}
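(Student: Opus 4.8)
The plan is to prove the cycle of implications (i) $\Rightarrow$ (ii) $\Rightarrow$ (iii) $\Rightarrow$ (i), with the adjacency matrix $A = A(Q)$ viewed as a nonnegative integer matrix whose $(i,j)$-entry is $[b_{ij}]_+$. The underlying combinatorial object is the support digraph $\Gamma$ of $A$: there is an edge $i \to j$ in $\Gamma$ exactly when $a_{ij} > 0$, equivalently exactly when there is an arrow $i \to j$ in $Q$. Note that, since $Q$ has no loops or $2$-cycles, the only cycles $\Gamma$ can have are cycles of length $\geqslant 3$ coming from oriented cycles of $Q$, so ``$Q$ acyclic'' is the same as ``$\Gamma$ has no directed cycles of any length.''

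For (i) $\Rightarrow$ (ii): if $Q$ is acyclic, then $\Gamma$ has no directed cycles, so after relabelling the vertices by a topological sort — which by Lemma~2.5 only conjugates $A$ by a permutation matrix and hence preserves all principal minors — the matrix $A$ becomes strictly upper triangular. Restricting to any vertex subset $S \subseteq Q_0$, the corresponding principal submatrix is the adjacency matrix of the full valued subquiver on $S$ (Lemma~2.3), which is again acyclic, hence again strictly triangular after a suitable reordering, hence has determinant $0$. So every principal minor of $A(Q)$ vanishes. For (ii) $\Rightarrow$ (iii): the characteristic polynomial of any $n \times n$ matrix $A$ is $\det(\lambda I - A) = \sum_{k=0}^n (-1)^k e_k(A) \lambda^{n-k}$, where $e_k(A)$ is the sum of the $k \times k$ principal minors of $A$. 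If all principal minors vanish then $e_k(A) = 0$ for $k \geqslant 1$, so the characteristic polynomial is $\lambda^n$, i.e. all eigenvalues are $0$.

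The substantive implication is (iii) $\Rightarrow$ (i), and this is where I expect the only real work. Suppose $Q$ is \emph{not} acyclic; I must produce a nonzero eigenvalue of $A$. The cleanest route is via the trace identity $\operatorname{tr}(A^m) = \sum_i \lambda_i^m = \sum_{\text{closed walks of length } m} (\text{product of edge-weights along the walk})$. Since $Q$ has an oriented cycle, $\Gamma$ has a directed cycle, say of length $\ell \geqslant 3$; then for $m = \ell$ (or any positive multiple of $\ell$) there is at least one closed walk of length $m$ in $\Gamma$, and since all edge-weights $a_{ij}$ are strictly positive integers, every closed walk contributes a strictly positive amount. Hence $\operatorname{tr}(A^m) = \sum_i \lambda_i^m \geqslant 1 > 0$ for that $m$, which is impossible if every $\lambda_i = 0$. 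Therefore $A$ has a nonzero eigenvalue, contradicting (iii). The one point that needs a word of care is that $A$ is not symmetric, so its eigenvalues are a priori complex; but the power-sum/trace formula $\operatorname{tr}(A^m) = \sum \lambda_i^m$ holds over $\mathbb{C}$ regardless, and the combinatorial count of weighted closed walks is what forces some power sum to be nonzero — no positivity of individual eigenvalues is needed. If one prefers to avoid even this, an equivalent argument is: a nonzero characteristic polynomial other than $\lambda^n$ must have a nonvanishing coefficient $e_k(A) \neq 0$ for some $k \geqslant 1$; the existence of a directed cycle in $\Gamma$ gives, by the permutation-expansion of the determinant (the only surviving permutations being products of cycles of $\Gamma$, all contributing with the same sign when weights are positive), a nonzero principal minor of size equal to the cycle length, so (ii) fails, contradicting (i). Either way the main obstacle — handling the non-symmetry of $A$ — is circumvented by working with traces of powers or with the sign-coherent permutation expansion rather than with eigenvectors directly.
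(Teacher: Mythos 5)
Your main line of argument is correct, and the implication that carries the real content is handled by a genuinely different method than the paper's. The implications (i)\,$\Rightarrow$\,(ii) (topological sort, strictly upper triangular after conjugation by a permutation matrix) and (ii)\,$\Rightarrow$\,(iii) (coefficients of the characteristic polynomial as signed sums of principal minors) coincide with the paper. For the remaining step the paper closes the loop differently: it proves (ii)\,$\Rightarrow$\,(i) by exhibiting a \emph{minimal} cyclic full subquiver, which is necessarily a chordless $k$-cycle and hence has a single surviving permutation in its determinant, giving a nonzero principal minor; and then it proves (iii)\,$\Rightarrow$\,(ii) by induction on the order of the principal minors, using the induction hypothesis to force every cyclic full subquiver of order $k$ to be a chordless $k$-cycle, so that all nonzero order-$k$ minors share the sign $(-1)^{k+1}$ and cannot sum to zero. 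Your trace argument for (iii)\,$\Rightarrow$\,(i) --- $\operatorname{tr}(A^{\ell})=\sum_j\lambda_j^{\ell}$ equals the weighted count of closed walks of length $\ell$, all terms nonnegative and at least one strictly positive when a directed $\ell$-cycle exists --- replaces that entire induction with a one-line positivity observation, and is arguably cleaner; what you lose is the finer structural information the paper extracts along the way (statement (ii) itself, and the chordless-cycle description used again in the Corollary following the proposition), which in your scheme is only recovered indirectly through the equivalence.

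One caution about your parenthetical ``equivalent argument'': the claim that all surviving permutations in the expansion of a principal minor contribute with the same sign is false in general --- a permutation that is a single $6$-cycle of $\Gamma$ has sign $-1$ while a product of two disjoint $3$-cycles has sign $+1$, so terms can cancel both within one minor and across minors of the same order. Moreover, producing one nonzero principal minor does not by itself make the coefficient $e_k(A)$ nonzero, which is what contradicting (iii) requires; this is exactly the difficulty the paper's induction (reducing to chordless cycles order by order) is designed to overcome. Since that passage is offered only as an optional variant and your trace argument stands on its own, this does not affect the validity of the proposal, but the variant should be deleted or repaired along the paper's lines.
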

\begin{proof} (i)$\Rightarrow$ (ii): Since $Q$ is a finite acyclic quiver, there exists a bijection between $Q_0$ and $\{1, 2, \dots, n\}$ such that if we have an arrow $j\rightarrow i$, then $j< i$. Hence by the Lemma 2.5, there exists a permutation matrix $P$ such that $PA(Q)P^T = A'(Q)$, where $A'(Q)$ is a strictly upper triangular matrix. The principal minors of $A'(Q)$ are zeros, so are the principal minors of $A(Q)$.

(ii) $\Rightarrow$ (i): Assume that $Q$ is not acyclic, then there exists at least one cycle in $Q$. Let $Q'$ be a full valued subquiver of $Q$ such that $Q'$ has a cycle. We may, without loss of generality, assume $Q'$ to be minimal with this property. Then $Q'$ must be a chordless $k$-cycle. Suppose $Q'_0=\{j_1,j_2,\dots,j_k\}$ with $k\geqslant 3$, and there only exist arrows from $j_s$ to $j_{s+1}$ $(1\leqslant s \leqslant k-1)$ and from $j_k$ to $j_1$. It follows from the Lemma 2.5 that the adjacency matrix $A(Q')$ of $Q'$ is similar to the matrix \[\begin{bmatrix}
  0&c_1&0&\dots&0&0\\
 0&0&c_2&\dots&0&0\\
 \vdots&\vdots&\vdots& &\vdots&\vdots\\
 0&0&0&\dots&0&c_{k-1}\\
 c_k&0&0&\dots&0&0
  \end{bmatrix},\]where $c_i$ equals to the first element $v(\alpha_i)_1$ of $v(\alpha_i)$, where $\alpha_i$ is the arrow from $j_i$ to $j_{i+1}$ for $1\leqslant i \leqslant k-1$, and $c_k$ equals the first element $v(\alpha_k)_1$ of $v(\alpha_k)$, where $\alpha_k$ is the arrow from $j_k$ to $j_1$. Hence $det(A(Q'))=(-1)^{1+k}c_1\dots c_k\neq 0$. Then the principal minor of $A(Q)$ indexed by $\{j_1, j_2, \dots ,j_k \}$ is not zero, which is a contradiction.

(ii) $\Rightarrow$ (iii): Suppose that $f(\lambda) = |\lambda I_n - A(Q)| = \lambda^n + a_1\lambda^{n-1} + \dots + a_{n-1}\lambda + a_n.$ Since $(-1)^ sa_s$ is the sum of all principal minors of order $s$, we get $f(\lambda) = \lambda^n$, then (iii) holds.

(iii) $\Rightarrow$ (ii): We will prove any principal minor of the matrix $A(Q)$ of order $k$ is zero by induction on $k$. In the cases of $k= 1$ and $k= 2$, the conclusion follows from the definition of valued cluster quivers. Now we assume that this conclusion holds for any $m$, where $m\leqslant k-1 \leqslant n$. We consider the case of $m=k$. Because the principal minors of $A(Q)$ which has $l$ rows and $l$ columns are zeros for each $l\in [1,k-1]$, $Q$ has not any $l$-cycles for any $l\in [1,k-1]$ and so are all of its full valued subquivers of order $k$. Let $Q'$ be any full valued subquiver of order $k$. If the full valued subquiver $Q'$ is acyclic, then the corresponding principal minor is zero. Otherwise $Q'$ has a cycle and hence $Q'$ must be a chordless $k$-cycle. Similar to the proof in (ii)$\Rightarrow$(iii), we deduce that the corresponding principal minor has the sign $(-1)^{k+1}$. Then all of the nonzero principal minors of order $k$ share the same sign. Because the eigenvalues of $A(Q)$ are zeros, the sum of all of principal minors of order $k$ is zero. Now it is obvious that any principal minor of the matrix $A(Q)$ of order $k$ is zero. We finish the proof.\end{proof}

As the converse-negative result  of Proposition \ref{acyclic}, we have:
\begin{Corollary}
Assume that $A_{i_1i_2\dots i_k}$ is a principal submatrix of $A(Q)$ indexed by $i_1, i_2, \dots, i_k$. If $detA_{i_1i_2\dots i_k} \neq 0$, then there exists a full subquiver $Q'$ of $Q$ which is a cycle such that $Q'_0 \subseteq \{i_1 , i_2 , \dots , i_k\}$.
\end{Corollary}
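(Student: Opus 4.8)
The plan is to localize everything to the full valued subquiver cut out by the given index set and then quote Proposition \ref{acyclic}. First, by Lemma 2.3 the principal submatrix $A_{i_1i_2\dots i_k}$ is exactly the adjacency matrix $A(Q'')$ of the full valued subquiver $Q''$ of $Q$ with vertex set $Q''_0=\{i_1,\dots,i_k\}$; in particular $Q''$ is again a valued cluster quiver and $\det A_{i_1i_2\dots i_k}=\det A(Q'')$. So the hypothesis $\det A_{i_1i_2\dots i_k}\neq 0$ says that $A(Q'')$ has at least one nonzero principal minor, namely its own determinant.

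By the equivalence (i)$\Leftrightarrow$(ii) of Proposition \ref{acyclic} applied to $Q''$, it follows that $Q''$ is not acyclic. Now I would rerun the argument from the implication (ii)$\Rightarrow$(i) in the proof of that proposition: among the full valued subquivers of $Q''$ that contain a cycle, choose one, say $Q'$, that is minimal for this property; minimality forces $Q'$ to be a chordless $k$-cycle for some $k\ge 3$, so that $Q'$, viewed as a quiver, is itself a cycle.

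It remains to transport $Q'$ back to $Q$. Since the full-subquiver relation is transitive — a full valued subquiver of the full valued subquiver $Q''$ of $Q$ is again a full valued subquiver of $Q$ — the quiver $Q'$ is a full subquiver of $Q$, and $Q'_0\subseteq Q''_0=\{i_1,\dots,i_k\}$ by construction; this is the required $Q'$. There is no real obstacle here: the statement is just the contrapositive of "(ii)$\Rightarrow$(i)" of Proposition \ref{acyclic} restricted to a full valued subquiver, and the only point demanding care is the refinement, already contained in that proof, that a minimal cycle-containing full valued subquiver is necessarily chordless, hence literally a cycle as a quiver.
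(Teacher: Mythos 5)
Your proof is correct and matches what the paper intends: the corollary is stated as the contrapositive of Proposition 2.6, and your argument (identify $A_{i_1i_2\dots i_k}$ with $A(Q'')$ via Lemma 2.3, conclude $Q''$ is not acyclic from the failure of condition (ii), extract a minimal cycle-containing full subquiver which must be a chordless cycle, and use transitivity of the full-subquiver relation) is exactly the route implicit in the paper's proof of (ii)$\Rightarrow$(i). The only cosmetic issue is reusing the letter $k$ for both the size of the index set and the length of the cycle.
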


\section{Spectra of exchange matrices}

In this section, we discuss firstly the relations between exchange spectrum of a valued cluster quiver and adjacency spectrum of its underlying valued graph, and secondly the relations between exchange spectrum of a valued cluster quiver and that of its full valued subquivers.

Let $Q$ be a valued cluster quiver, we now turn on the properties of the exchange matrix $B(Q)$. Since $B(Q)$ is skew-symmetrizable, there is a diagonal matrix $D$ with positive diagonal entries such that $DB(Q)$ is skew-symmetric. It is easy to check that $D^{\frac{1}{2}}B(Q)D^{-\frac{1}{2}}$ is real, skew-symmetric and similar to $B(Q)$. We will use this property frequently and some well-known properties for real skew-symmetric matrices are given as follows.
\begin{Lemma}
Let $B$ be a real skew-symmetric matrix of order $n$, then following
assertions hold:
\begin{enumerate}
\item[(i)] $det(B)\geq 0$. Moreover, if $n$ is odd, then $det(B)=0$.
\item[(ii)] The eigenvalues of $B$ appear in complex conjugate pairs, and any eigenvalue of $B$ is either an imaginary number or zero.
\item[(iii)] The sum of all the eigenvalues of $B$ is zero, and equals the sum of all the image parts of the eigenvalues of $B$.
\item [(iv)] There exists an orthogonal matrix $P$ such that \[PBP^T=\begin{bmatrix}
  0&&&&&\\
  &\ddots&&&&\\
  &&0&&&\\
  &&&B_1&&\\
  &&&&\ddots&\\
  &&&&&B_s
  \end{bmatrix}, \]where $B_k=\begin{bmatrix}
  0&b_k\\
  -b_k&0
  \end{bmatrix}$,$b_\in \mathbb{R}$, $1\leqslant k \leqslant s$, and $P^T$ is the transpose of $P$.
\item [(v)] There exists an unitary matrix $U$ such that $UBU^*$ is a diagonal matrix, where $U^*$ is the conjugate transpose of $U$.
\end{enumerate}
\end{Lemma}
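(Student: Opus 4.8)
The plan is to derive all five assertions from two structural facts: that $B$ is a \emph{normal} matrix and that its characteristic polynomial has real coefficients. I would prove (v) first, then (ii) and (iii), then (iv), and finally deduce (i) from (iv). For (v): since $B$ is real with $B^{T}=-B$, we have $BB^{T}=-B^{2}=B^{T}B$, so $B$ is normal; the spectral theorem for normal matrices then gives a unitary $U$ with $UBU^{*}$ diagonal, which is exactly (v).

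For (ii): $B$ being real, $\det(\lambda I_n-B)$ is a real polynomial, so its nonreal roots occur in conjugate pairs, giving the first part. For the second part, if $Bv=\lambda v$ with $v\in\mathbb C^{n}$ a unit vector, then $\lambda=v^{*}Bv$, and taking conjugate transposes gives $\overline{\lambda}=v^{*}B^{*}v=-v^{*}Bv=-\lambda$, using $B^{*}=\overline B^{T}=B^{T}=-B$; hence $\mathrm{Re}(\lambda)=0$, so every eigenvalue is purely imaginary or zero. For (iii): the diagonal entries of a skew-symmetric matrix vanish, so $\mathrm{tr}(B)=0$; since, by (v), the trace equals the sum of the eigenvalues counted with multiplicity, that sum is $0$. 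Writing each eigenvalue as $\sqrt{-1}\,\beta_j$ with $\beta_j\in\mathbb R$, the relation $\sqrt{-1}\sum_j\beta_j=0$ forces $\sum_j\beta_j=0$, so the sum of the imaginary parts is also $0$; this is the claimed equality of the two (both zero) sums.

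For (iv): by (v) and (ii) the eigenvalues, with multiplicity, are $0$ with multiplicity $r=\dim\ker B$ together with purely imaginary pairs $\pm\sqrt{-1}\,b_k$, $b_k>0$, $1\le k\le s$, where $r+2s=n$. Fix an orthonormal basis of each eigenspace $E_{\sqrt{-1}\,b_k}$; for a basis vector $v=x+\sqrt{-1}\,y$ with $x,y\in\mathbb R^{n}$, the identity $v^{T}Bv=-v^{T}Bv=0$ gives $v^{T}v=0$, i.e. $x$ and $y$ are orthogonal of equal norm, and from $Bx=-b_ky,\ By=b_kx$ one sees that, after normalizing, $B$ acts on the real plane spanned by $x,y$ by the matrix $\begin{bmatrix}0&b_k\\-b_k&0\end{bmatrix}$. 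By normality the eigenspaces for distinct eigenvalues are orthogonal, so these real planes and $\ker B$ are mutually orthogonal and together span $\mathbb R^{n}$ (dimension count $r+2s=n$); letting $P$ send the standard basis to an orthonormal basis of $\ker B$ followed by these paired bases yields an orthogonal $P$ putting $B$ in the displayed block form. (Alternatively one can prove (iv) by induction on $n$, peeling off one invariant plane at a time on its orthogonal complement, which is again real skew-symmetric.)

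For (i): from (iv), $\det B=\det(PBP^{T})$ is the product of the block determinants; each zero block contributes $0$ and each $2\times2$ block contributes $b_k^{2}\ge 0$, so $\det B\ge 0$ (indeed $\det B=\prod_k b_k^{2}$ if $r=0$ and $\det B=0$ if $r\ge 1$). If $n$ is odd, then $\det B=\det B^{T}=\det(-B)=(-1)^{n}\det B=-\det B$, forcing $\det B=0$ (equivalently $r=n-2s\ge 1$, so $B$ is singular). The only step needing genuine care is (iv): correctly extracting from a complex eigenvector $v=x+\sqrt{-1}\,y$ the real orthonormal pair realizing the $2\times2$ block with the right sign convention, and checking that the invariant planes together with $\ker B$ exhaust $\mathbb R^{n}$ orthogonally. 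Everything else is immediate from normality, the reality of the characteristic polynomial, and $\mathrm{tr}(B)=0$.
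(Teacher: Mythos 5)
Your proof is correct. Note that the paper itself offers no proof of this lemma: it is stated as a list of well-known facts about real skew-symmetric matrices, so there is nothing to compare your argument against. Your route --- normality of $B$ giving (v) via the spectral theorem, reality of the characteristic polynomial plus $\overline{\lambda}=-\lambda$ giving (ii), $\mathrm{tr}(B)=0$ giving (iii), the real block-diagonalization for (iv), and (i) as a consequence --- is the standard one and each step checks out. The only place where your sketch leaves real work implicit is the one you yourself flag in (iv): to see that the real planes coming from an orthonormal basis of a single eigenspace $E_{\sqrt{-1}\,b_k}$, from distinct eigenspaces, and from $\ker B$ are mutually orthogonal, you need both $v^{*}w=0$ (from normality) and $v^{T}w=0$ (from $v^{T}Bw=-\,(Bv)^{T}w$ together with $b_k+b_l>0$); spelling that out, or falling back on the induction you mention, would make the argument complete.
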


\subsection{Relations between exchange spectrum and adjacency spectrum }.

The largest eigenvalue of the adjacency matrix of an unoriented graph has been studied well and this investigation is still active. It is also interesting to consider the exchange spectrum radius of a valued cluster quiver. Here we introduce some notations for the sake of the following proofs. Let $M = (m_{ij})_{l\times s}$ and $N = (n_{ij})_{l\times s}$ be two real matrices of the same size, $M>N(M\geqslant N)$ means $m_{ij}>n_{ij}(m_{ij}\geqslant n_{ij})$ for all $1\leqslant i\leqslant l,1\leqslant j\leqslant s$. For a real matrix $M = (m_{ij})_{m\times n}$, if $m_{ij}> 0(\geqslant 0)$ for any $i\in [1,m],j \in [1,n]$, $M$ is said to be positive(nonnegative) and is denoted by $M > 0(M \geqslant 0)$. And ${\rm i} = \sqrt{-1}$.
\begin{Lemma}
[\cite{BR}, Theorem 3.6.2]The eigenvalues of the complex matrix $M=(m_{ij})_{n\times n}$ of order $n$ lie in the region of the complex plane determined by the union of the $n$-closed discs \[T_i = \{x | |x - m_{ii}|\leq t_i\},\quad i=1,2,\dots,n.\]where $t_i=\sum_{j=1,j\neq i}^n |m_{ij}|, i=1,2,\dots,n.$
\end{Lemma}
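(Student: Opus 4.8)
The final statement to prove is the Geršgorin disc theorem (cited as \cite{BR}, Theorem 3.6.2): the eigenvalues of a complex matrix $M=(m_{ij})_{n\times n}$ lie in the union of the closed discs $T_i=\{x:|x-m_{ii}|\leqslant t_i\}$ where $t_i=\sum_{j\neq i}|m_{ij}|$.

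\textbf{Plan.} The plan is to give the classical eigenvector argument. Let $\lambda$ be any eigenvalue of $M$ with a corresponding nonzero eigenvector $u=(u_1,\dots,u_n)^T$. First I would choose an index $i$ at which the coordinate of $u$ is largest in modulus, i.e. pick $i$ so that $|u_i|=\max_{1\leqslant j\leqslant n}|u_j|$; since $u\neq 0$ we have $|u_i|>0$. Next I would write out the $i$-th row of the equation $Mu=\lambda u$, namely $\sum_{j=1}^n m_{ij}u_j=\lambda u_i$, and isolate the diagonal term to get $(\lambda-m_{ii})u_i=\sum_{j\neq i}m_{ij}u_j$. Taking absolute values and applying the triangle inequality together with $|u_j|\leqslant|u_i|$ for every $j$ yields $|\lambda-m_{ii}|\,|u_i|\leqslant\sum_{j\neq i}|m_{ij}|\,|u_j|\leqslant\Big(\sum_{j\neq i}|m_{ij}|\Big)|u_i|=t_i|u_i|$. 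Dividing through by $|u_i|>0$ gives $|\lambda-m_{ii}|\leqslant t_i$, i.e. $\lambda\in T_i$. Since $\lambda$ was an arbitrary eigenvalue, every eigenvalue lies in $\bigcup_{i=1}^n T_i$, which is the assertion.

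\textbf{Main obstacle.} There is essentially no obstacle here; the only point requiring a little care is the selection of the dominating index $i$ and the observation that the corresponding coordinate is strictly nonzero, which is exactly what allows the final division. Everything else is the triangle inequality. In the context of this paper, the statement is invoked only as a black box for bounding exchange spectrum radii of valued cluster quivers, so a short self-contained proof along the above lines, or simply a reference to \cite{BR}, suffices.
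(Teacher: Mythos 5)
Your argument is the standard and correct proof of the Geršgorin disc theorem: choosing the index of the coordinate of maximal modulus, isolating the diagonal term in the $i$-th row of $Mu=\lambda u$, and applying the triangle inequality is exactly the classical derivation. The paper itself gives no proof — it simply cites \cite{BR}, Theorem 3.6.2 — so your self-contained argument is a valid (indeed the canonical) substitute for that reference.
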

\begin{Proposition} Suppose that $Q$ is a valued cluster quiver with $n = |Q_0|$ and the connected components $Q'_1, Q'_2, \dots, Q'_s$.
Let $B=B(Q)= (b_{ij})_{n\times n}$ be the exchange matrix of $Q$  and $C=C(Q)= (c_{ij})_{n\times n}$ be the adjacency matrix of the underlying valued graph $\bar{Q}$.  Let $h_i = \sum_{j=1}^n|b_{ij}| = \sum_{j=1}^nc_{ij}$ be the degree of the vertex $i\in Q_0$ and $h = \max\{h_1 , h_2 , \dots , h_n\}$. Then for the exchange spectrum radius $\lambda$ of $Q$ and the adjacency spectrum radius $\mu$ of the valued graph $\bar{Q}$, the following is satisfied that \[0\leqslant \lambda \leqslant \mu \leqslant h = r, \] for  $r_p = \max_{i\in (Q'_p)_0}\sum_{j\in (Q'_p)_0}|b_{ij}|, 1\leqslant p\leqslant s,$ and $r = \max\{r_1 , r_2 , \dots , r_s\}$.
\end{Proposition}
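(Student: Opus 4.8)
The plan is to reduce $B$ and $C$ to their real normal forms via the common symmetrizer and then verify the four links of the chain $0 \le \lambda \le \mu \le h = r$ one at a time; the content of the statement sits entirely in the middle inequality $\lambda \le \mu$.

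First I would fix a positive diagonal matrix $D = \mathrm{diag}(d_1,\dots,d_n)$ with $DB$ skew-symmetric and $DC$ symmetric (available since, as observed just before the statement, $B = B(Q)$ and $C = C(Q)$ are symmetrizable with respect to the same $D$), and set $B' = D^{1/2}BD^{-1/2}$ and $C' = D^{1/2}CD^{-1/2}$. Then $B'$ is real skew-symmetric, $C'$ is real symmetric, both are normal and similar to $B$ and $C$ respectively, so $\lambda = \rho(B') = \|B'\|_2$ and $\mu = \rho(C') = \|C'\|_2$ in the operator $2$-norm (for a normal matrix the spectral radius equals the $2$-norm). Using $b_{ii} = 0$ and $c_{ij} = [b_{ij}]_+ + [-b_{ij}]_+ = |b_{ij}|$, I record the entrywise identity $|B'_{ij}| = d_i^{1/2}|b_{ij}|d_j^{-1/2} = C'_{ij}$.

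Now $0 \le \lambda$ is just the definition of $\mathrm{Radi}(Q)$. For $\lambda \le \mu$ I would estimate, for any $x \in \C^n$, writing $|x|$ for its entrywise modulus,
\[
\|B'x\|^2 = \sum_{i}\Bigl|\sum_j B'_{ij}x_j\Bigr|^2 \le \sum_i\Bigl(\sum_j |B'_{ij}|\,|x_j|\Bigr)^2 = \sum_i\Bigl(\sum_j C'_{ij}|x_j|\Bigr)^2 = \bigl\|C'|x|\bigr\|^2 \le \mu^2\bigl\||x|\bigr\|^2 = \mu^2\|x\|^2,
\]
whence $\lambda = \|B'\|_2 \le \mu$ (alternatively one can quote the Perron--Frobenius fact that $|M| \le N$ entrywise implies $\rho(M) \le \rho(N)$). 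For $\mu \le h$ I would apply the eigenvalue-localization lemma (Ger\v{s}gorin discs) above to $C$: since $c_{ii} = 0$ and $\sum_{j\ne i}|c_{ij}| = h_i$, every eigenvalue of $C$ lies in $\bigcup_i\{x : |x|\le h_i\}$, so $\mu \le \max_i h_i = h$. Finally $h = r$ is bookkeeping on connected components: if $i \in (Q'_p)_0$ then every arrow incident with $i$ has both ends in $Q'_p$, so $h_i = \sum_{j=1}^n |b_{ij}| = \sum_{j\in (Q'_p)_0}|b_{ij}|$, hence $r_p = \max_{i\in(Q'_p)_0}h_i$ and $r = \max_p r_p = \max_i h_i = h$.

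The hard part is the step $\lambda \le \mu$, and it is essentially the only place where skew-symmetrizability is used: the trick is that conjugating by $D^{1/2}$ makes $B'$ and $C'$ both normal, so the spectral radii become operator norms, and then the entrywise domination $|B'| = C'$ can be pushed through a triangle-inequality bound. Everything else is routine.
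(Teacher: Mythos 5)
Your proof is correct and takes essentially the same route as the paper: conjugate by $D^{1/2}$ to make $B'$ real skew-symmetric and $C'$ real symmetric with $|B'_{ij}| = C'_{ij}$, deduce $\lambda \leqslant \mu$ from this entrywise domination, and get $\mu \leqslant h$ from the Ger\v{s}gorin-disc lemma (the paper's Lemma 3.2). The only cosmetic difference is in closing the middle step: the paper takes an eigenvector $x$ of $B'$ for $\lambda{\rm i}$, passes to $y=|x|$ with $C'y \geqslant \lambda y$, and finishes with the Rayleigh quotient, whereas you bound the operator $2$-norm of $B'$ by that of $C'$ directly; the two arguments are interchangeable.
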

\begin{proof} $0\leqslant \lambda$ and $h = r$ are obvious by definitions.

$\mu \leqslant h$ is just a corollary of Lemma 3.2 by letting $M=C(Q)$ in Lemma 3.2.

Let us show that the inequation $\lambda\leqslant \mu$ holds. Since $B$ is a skew-symmetrizable matrix, there exists a diagonal matrix $D = diag(d_1, d_2, \dots, d_n)$ with $d_i>0$ for $1\leqslant i \leqslant n$ such that $DB$ is skew-symmetric. Then it is clear that $B' = (b'_{ij}) = D^{\frac{1}{2}}BD^{-\frac{1}{2}}$ is skew-symmetric, $C' = (c'_{ij}) = D^{\frac{1}{2}}CD^{-\frac{1}{2}}$ is symmetric and $c'_{ij} = |b'_{ij}|$ for any $i, j \in [1,n]$. Let $x = (x_1, x_2, \dots, x_n)^T$ be an eigenvector of $B'$ corresponding to $\lambda {\rm i}$, say, $B'x = (\lambda {\rm i})x$.Thus for any $j \in [1,n]$, we have
\[ \lambda |x_j| = |\lambda {\rm i}x_j| = |\sum_{s=1}^nb'_{js}x_s|\leqslant \sum_{s=1}^n|b'_{js}||x_s| = \sum_{s=1}^nc'_{js}|x_s|. \]Let $y = (|x_1|, |x_2|, \dots, |x_n|)^T$, we  get $ C'y \geqslant \lambda y \geqslant 0$, and $ y\geqslant0,y\neq0.$ Therefore, $\lambda y^Ty \leqslant y^TC'y.$

Now let $z$ be an eigenvector of $C'$ corresponding to $\mu$. By the Rayleith theorem, we have\[\lambda \leqslant \frac{y^TC'y}{y^Ty} \leqslant \frac{z^TC'z}{z^Tz} = \mu. \]
\end{proof}
\begin{Proposition}
With the notations above, if $\lambda = h$, then there exists a full valued subquiver $Q'$ of $Q$ which is also a connected component of $Q$ such that $h_i=\sum_{j\in (Q')_0}|b_{ij}|=h$, for each vertex $i\in (Q')_0$.
\end{Proposition}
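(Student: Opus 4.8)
The plan is to reduce everything to a statement about the adjacency matrix of $\bar Q$ and then apply the equality case of the Perron--Frobenius bound. First, by Proposition 3.3 one has $0\leqslant\lambda\leqslant\mu\leqslant h$, so the hypothesis $\lambda=h$ forces $\mu=h$ as well. Since the conclusion only mentions the degrees $h_i$, it then suffices to prove the purely spectral statement: if the adjacency spectrum radius $\mu$ of $\bar Q$ equals $h$, then some connected component of $Q$ has all of its vertices of degree $h$. Accordingly I would work from now on with the nonnegative matrix $C=C(Q)=(c_{ij})$, which has $c_{ii}=0$, $c_{ij}=|b_{ij}|$ for $i\neq j$, and whose $i$-th row sum is exactly $h_i$.

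Next I would exploit the block decomposition of $C$. Let $Q'_1,\dots,Q'_s$ be the connected components of $Q$, with adjacency matrices $C_1,\dots,C_s$; there is a permutation matrix $P$ with $PCP^T=\mathrm{diag}(C_1,\dots,C_s)$, each $C_p$ is nonnegative, and each $C_p$ is irreducible because $Q'_p$ is connected. As in the proof of Proposition 3.3, $C$ is similar to the real symmetric matrix $D^{\frac12}CD^{-\frac12}$, so all eigenvalues of $C$, and hence of each $C_p$, are real; together with nonnegativity this makes $\rho(C_p)$ an eigenvalue of $C_p$, and $\mu=\rho(C)=\max_{1\leqslant p\leqslant s}\rho(C_p)$.

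Now pick $p$ with $\rho(C_p)=\mu$ and put $Q'=Q'_p$. Since no arrow joins two distinct components, the $i$-th row sum of $C_p$ equals $h_i=\sum_{j\in(Q')_0}|b_{ij}|$ for every $i\in(Q')_0$; using the elementary fact that the spectral radius of a nonnegative matrix does not exceed its largest row sum, I get
\[ \mu=\rho(C_p)\leqslant\max_{i\in(Q')_0}h_i\leqslant h=\mu, \]
hence $\rho(C_p)=\max_{i\in(Q')_0}h_i$. Finally I would invoke the equality case of Perron--Frobenius for the irreducible nonnegative matrix $C_p$: choosing a positive Perron eigenvector $z$ with $C_pz=\rho(C_p)z$ and an index $i_0$ with $z_{i_0}=\max_i z_i$, the estimate $\rho(C_p)z_{i_0}=\sum_j(C_p)_{i_0 j}z_j\leqslant h_{i_0}z_{i_0}\leqslant\rho(C_p)z_{i_0}$ must be an equality throughout, so $h_{i_0}=\rho(C_p)$ and $z_j=z_{i_0}$ whenever $(C_p)_{i_0 j}\neq 0$; irreducibility propagates this to all vertices, $z$ is constant, and therefore $h_i=\rho(C_p)=\mu=h$ for every $i\in(Q')_0$, which is exactly the assertion.

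The step I expect to be the real obstacle is this last one: passing from ``spectral radius equals maximal row sum'' to ``all row sums equal'' genuinely requires irreducibility (that is, connectedness of the chosen component) and positivity of the Perron eigenvector. Everything else is routine --- the easy inequality $\rho(C_p)\leqslant\|C_p\|_\infty$ and the bookkeeping that row sums inside a component coincide with the global degrees. One could instead stay closer to the proof of Proposition 3.3 by starting from the vector $y=(|x_1|,\dots,|x_n|)^T$ there, noting that $\lambda=\mu$ collapses the Rayleigh-quotient chain to $C'y=\mu y$ with $y\geqslant 0$, $y\neq 0$, and then studying the support of $y$; but carrying the diagonal rescaling $D^{\frac12}$ through that argument makes it messier than working directly with $C$.
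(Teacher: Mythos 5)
Your proof is correct and follows essentially the same route as the paper's: both reduce to a connected component whose adjacency spectral radius equals $h$ and then invoke the equality case of Perron--Frobenius for the irreducible nonnegative adjacency matrix of that component. The only cosmetic difference is that the paper applies Perron--Frobenius directly to the subinvariant all-ones vector (from $C_k(1,\dots,1)^T\leqslant h(1,\dots,1)^T$ it concludes equality), whereas you re-derive that equality case by a maximum-principle argument on the positive Perron eigenvector and propagate constancy through the component.
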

\begin{proof} Assume that $h{\rm i}$ is an eigenvalue of $B(Q)$. If the set of arrows $Q_1 = \varnothing$, it is obvious. Now we assume that $Q_1 \neq \varnothing$ so that $h \neq 0$. Let $Q'_1, Q'_2, \dots , Q'_s$ be all connected components of the valued cluster quiver $Q$ and $B_i=B(Q'_i)$ be the exchange matrix of $Q'_i$ for $i=1,2,\dots,s$. Then we have \[\mathrm{Spec}(B(Q)) = \bigcup_{i=1}^s\mathrm{Spec}(B_i). \]We may assume that $h{\rm i}$ is an eigenvalue of $B_k$ for some $k \in [1, s]$. Without loss of generality, we assume that $(Q'_k)_0 = \{1,2,\dots ,m\},m \geqslant 2$, $B_k = (b_{ij})_{m\times m}$, and $C_k = C(Q'_k) = (c_{ij})_{m\times m}$. Since $h$ is the exchange spectrum radius of $Q'_k$, it follows from Proposition 3.4 that $h$ is the largest eigenvalue of $C_k$. It is also clear that \[C_k(1, 1, \dots, 1)^T \leqslant h(1, 1, \dots, 1)^T.\]
Since $\bar{Q'_k}$ is connected, the symmetrizable matrix $C_k$ is irreducible and $C_k \geqslant 0$. It follows from the Perron-Frobenius theorem that $C_k(1, 1, \dots, 1)^T = h(1, 1, \dots, 1)^T$. Now it is easy to see that the connected component $Q'_k$ is what we need.
\end{proof}
From Proposition 3.3, for any valued cluster quiver, we know its exchange spectrum radius is not more than the adjacency spectrum radius of its underlying valued graph. In particular, when its underlying graph is a tree, we can say more.
\begin{Proposition}
Let $Q$ be a valued cluster quiver with $Q_0=n$ and its underlying graph $\bar Q$ be a tree. Assume that $f(x)$ and $g(x)$ are the exchange polynomial of $Q$ and the adjacency polynomial of the underlying valued graph $\bar Q$ respectively, that is, $f(x)=|xI_n - B(Q)|,\;g(x)=|xI_n - C(Q)|.$ Then, for  $\lambda \in \mathbb R$,  $f(\lambda {\rm i})= 0$ if and only if $g(\lambda)=0$. Moreover, it holds that \[\mathrm{Spec}(B(Q))= \begin{bmatrix} \lambda_0{\rm i}&\lambda_1{\rm i}&\dots&\lambda_p{\rm i}\\n_0&n_1&\dots&n_p \end{bmatrix} \;\;\;\;\;\text{if and only if}\;\;\;\;\; \mathrm{Spec}(C(Q))= \begin{bmatrix} \lambda_0&\lambda_1&\dots&\lambda_p\\n_0&n_1&\dots&n_p \end{bmatrix} \]where $0\leqslant \lambda_1 < \lambda_2 < \dots <\lambda_p$,and $n_0 + n_1 + \dots +n_p = n$.
\end{Proposition}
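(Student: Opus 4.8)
The plan is to exploit the fact that, since $\bar Q$ is a tree, the quiver $Q$ is automatically acyclic (a tree has no cycles), and to relate the two characteristic polynomials by a clever substitution arising from the skew-symmetric normalization. Concretely, by skew-symmetrizability there is a positive diagonal $D$ with $B' = D^{1/2}B(Q)D^{-1/2}$ real skew-symmetric and $C' = D^{1/2}C(Q)D^{-1/2}$ real symmetric, and crucially $c'_{ij} = |b'_{ij}|$ for all $i,j$. Since $B'$ is similar to $B(Q)$ and $C'$ to $C(Q)$, it suffices to show $f(\lambda{\rm i}) = |\lambda{\rm i}\, I_n - B'| = 0 \iff g(\lambda) = |\lambda\, I_n - C'| = 0$. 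I would prove the stronger statement that $|x I_n - C'|$ and $i^{-n}|x{\rm i}\, I_n - B'|$ (or the appropriate sign-adjusted version) agree as polynomials in $x$, which immediately yields the claimed equality of spectra including multiplicities.

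The key combinatorial step is the expansion of a determinant over permutations: $\det(xI - M) = \sum_{\sigma}\mathrm{sgn}(\sigma)\prod_i (xI - M)_{i,\sigma(i)}$. For a tree, the only permutations $\sigma$ that contribute a nonzero product of off-diagonal entries are products of transpositions swapping the endpoints of edges of $\bar Q$ (any longer cycle in $\sigma$ would need a cycle in the graph, which a tree lacks). Thus the nonzero terms correspond to partial matchings of the tree. For the symmetric matrix $C'$, an edge $\{i,j\}$ contributes a factor $c'_{ij}c'_{ji} = (c'_{ij})^2 > 0$ together with the sign $-1$ from the transposition; for the skew-symmetric matrix $B'$, the same edge contributes $b'_{ij}b'_{ji} = -(b'_{ij})^2 = -(c'_{ij})^2$ with the same $-1$. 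So each matching of size $m$ contributes, to $\det(xI_n - C')$, a term $x^{n-2m}\prod (c'_{ij})^2$ (times $(-1)^m$ from the transpositions times $(-1)^m$ hidden in the diagonal/sign bookkeeping — one checks the net sign is $(-1)^m$), whereas in $\det(x{\rm i}\, I_n - B')$ the same matching contributes $(x{\rm i})^{n-2m}\cdot(-1)\cdot$[stuff] per edge; collecting the factors of ${\rm i}$, one gets exactly ${\rm i}^n$ times the corresponding term of $\det(xI_n - C')$. Summing over all matchings gives $\det(x{\rm i}\, I_n - B') = {\rm i}^n\det(xI_n - C')$ as polynomials in $x$, from which both the equivalence $f(\lambda{\rm i}) = 0 \iff g(\lambda) = 0$ and the equality of multiplicities follow. (Since $C(Q)\geqslant 0$ and, by Proposition 3.3, $\mu\geqslant 0$, and since $0$ may or may not be an eigenvalue, the spectrum of $C(Q)$ is listed as $\lambda_0 < \lambda_1 < \dots$ with $\lambda_0\geqslant 0$; this matches the statement.)

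The main obstacle is the careful sign bookkeeping: tracking the sign contributed by a $k$-fold product of transpositions in $\mathrm{sgn}(\sigma)$, the sign coming from the $-M$ in $xI - M$ on the matched coordinates versus the $x$ on the unmatched coordinates, and the powers of ${\rm i}$ distributed across the $n-2m$ diagonal factors $x{\rm i}$ and the $m$ edge factors $b'_{ij}b'_{ji} = -(c'_{ij})^2$ — these must be shown to combine into a single clean global factor ${\rm i}^n$ independent of the matching. A clean way to organize this is to write $B' = {\rm i}\,H$ for a Hermitian... no — rather, observe that for a tree one can choose signs $\epsilon_i = \pm 1$ (a "$2$-coloring"-type gadget, or better, an explicit diagonal unitary $U = \mathrm{diag}({\rm i}^{t_i})$ with $t_i$ depending on distances in the tree) so that $U B' U^{-1}$ has purely real entries equal (up to overall sign on each entry) to those of $C'$; then $\det(x{\rm i} I - B') = \det(x{\rm i} I - UB'U^{-1})$ reduces to a determinant one can match term-by-term with $\det(xI - C')$ after pulling out ${\rm i}^n$. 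I would present the matching/bipartition argument first for intuition and then formalize via this diagonal conjugation to make the sign accounting rigorous.
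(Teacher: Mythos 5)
Your proposal is correct and follows essentially the same route as the paper: both arguments rest on the observation that for a tree only permutations which are products of disjoint transpositions along edges contribute to the determinant expansion, combined with $b_{ij}b_{ji}=-c_{ij}c_{ji}$, and your single identity $f(x{\rm i})={\rm i}^n g(x)$ is exactly the paper's parity-split pair $f(\lambda{\rm i})=(-1)^m g(\lambda)$ for $n=2m$ and $f(\lambda{\rm i})={\rm i}(-1)^m g(\lambda)$ for $n=2m+1$; the sign bookkeeping you flag does close up, since a size-$m$ matching contributes ${\rm i}^{n-2m}x^{n-2m}\prod (c'_{ij})^2$ to $f(x{\rm i})$ versus $(-1)^m x^{n-2m}\prod (c'_{ij})^2$ to $g(x)$ and ${\rm i}^{n-2m}={\rm i}^n(-1)^m$. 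The only organizational difference is that the paper routes the computation through principal minors and the coefficient formulas of the characteristic polynomial rather than expanding $\det(xI-M)$ in one shot, and it needs neither the conjugation to $B'$, $C'$ nor your diagonal-unitary gadget.
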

\begin{proof} Note that $B=B(Q)=(b_{ij})_{n\times n}$ is skew-symmetrizable and $C=C(Q)=(c_{ij})_{n\times n}$ is symmetrizable with the same diagonal matrix $D$, and it is easy to see that $|b_{ij}|= c_{ij}$. We have $D^{\frac{1}{2}}BD^{-\frac{1}{2}}$ is skew-symmetric and $D^{\frac{1}{2}}CD^{-\frac{1}{2}}$ is symmetric.

At first, we prove that $det(B)=(-1)^{\frac{n}{2}}det(C).$

We have $det(B)=\sum_{\pi\in S_n}\mathrm{sgn}(\pi)b_{1\pi1}b_{2\pi2}\dots b_{n\pi n},\;  det(C)=\sum_{\pi\in S_n}\mathrm{sgn}(\pi)c_{1\pi1}c_{2\pi2}\dots c_{n\pi n}$, where $S_n$ means the permutation group of $\{1,2,\dots,n\}$. Because of the definition of $B$, $b_{ij} \neq 0$ if and only if the two vertices $i$ and $j$ are adjacent in $\bar{Q}$. In particular, if $i=\pi i$, then $b_{i\pi i}=0$. If $\pi$ is not the identity, then $\pi$
can be uniquely expressed to be a product of disjoint cycles of length at least two. Let the cycle $(spr\dots t)$ be a factor of length more than two of $\pi$, then it corresponds to the factor $b_{sp}b_{pr}\dots b_{ts}$ of the term $\mathrm{sgn}(\pi)b_{1\pi 1}\dots b_{n\pi_n}$. And $b_{sp}b_{pr}\dots b_{ts} \neq 0$ if and only if the pairs $\{s,p\}$, $\{p,r\}$, $\dots$, $\{t,s\}$ are adjacent pairs in $\bar{Q}$. In this case, the induced subgraph of $\bar Q$ determinded by $\{s,p,r,\dots,t\}$ admits a cycle of length more than two. But we know that there are no $k$-cycles for $k\geqslant 3$ in $\bar Q$. Thus if the term $\mathrm{sgn}(\pi)b_{1\pi1}b_{2\pi2}\dots b_{n\pi n}$ does not vanish, $\pi$ must be a product of disjoint cycles of length two. The same statements hold for $C$.

 If $n= |Q_0|$ is odd, then $det(B)=0$ follows from Lemma 3.1(1) and the fact that $D^{\frac{1}{2}}BD^{-\frac{1}{2}}$ is skew-symmetric. For any $\pi \in S_n$, if the term $\mathrm{sgn}(\pi)c_{1\pi1}c_{2\pi2}\dots c_{n\pi n}\neq 0$, then it implies $\pi i\neq i$ for any $i\in[1,n]$ and $\pi$ is a product of disjoint cycles of length two for $\bar{Q}$ is a tree, which means it is impossible for $n= |Q_0|$ to be odd. Thus, $\mathrm{sgn}(\pi)c_{1\pi1}c_{2\pi2}\dots c_{n\pi n}= 0$ for any $\pi$. Hence $det(B)=0=det(C)$.

  If $n= |Q_0|$ is even, it is easy to see that for $\pi\in S_n$, $b_{1\pi1}b_{2\pi2}\dots b_{n\pi n}\neq 0$ if and only if $c_{1\pi1}c_{2\pi2}\dots c_{n\pi n}\neq0$ and in this case, $\pi$ is a product of disjoint cycles of length two. Note that $b_{ij}b_{ji}=-c_{ij}c_{ji}$, we have $\mathrm{sgn}(\pi)b_{1\pi1}b_{2\pi2}\dots b_{n\pi n} = (-1)^{\frac{n}{2}}\mathrm{sgn}(\pi)c_{1\pi1}c_{2\pi2}\dots c_{n\pi n}.$ for any $\pi\in S_n$. Thus it follows $det(B)=(-1)^{\frac{n}{2}} det(C)$.

Because the underlying graphs of full valued subquivers of $Q$ do not have $l$-cycles for $l\geqslant 3$ either, then for any full valued subquiver $Q'$ of $Q$ of order $r$, we have $det(B(Q')) = (-1)^{\frac{r}{2}}det(C(Q'))$. By the relations between coefficients of characteristic polynomials and principal minors, we have the following statements.

When $n$ is even, let $n= 2m$. Note that all principal minors of $B$ of odd orders are zeros, we may assume that \[f(\lambda)= \lambda^{2m} + v_2\lambda^{2m-2} + v_4\lambda^{2m-4} + \dots + v_{2m-2}\lambda^2 + v_{2m}, \]where $(-1)^kv_k$ is the sum of all of principal minors of $B$ of order $k$. Then \[g(\lambda)= \lambda^{2m} + (-1)v_2\lambda^{2m-2} + (-1)^2v_4\lambda^{2m-4} + \dots + (-1)^{m-1}v_{2m-2}\lambda^2 + (-1)^mv_{2m}. \]It is easy to see that $f(\lambda {\rm i})= (-1)^mg(\lambda)$.

When $n$ is odd, let $n= 2m+1$. Similarly, we may assume that \[f(\lambda)= \lambda^{2m+1} + v_2\lambda^{2m-1} + v_4\lambda^{2m-3} + \dots + v_{2m-2}\lambda^3 + v_{2m}\lambda, \] and  then, $g(\lambda)= \lambda^{2m+1} + (-1)v_2\lambda^{2m-1} + (-1)^2v_4\lambda^{2m-3} + \dots + (-1)^{m-1}v_{2m-2}\lambda^3 + (-1)^mv_{2m}\lambda.$ It is clear that $f(\lambda {\rm i})= {\rm i}(-1)^mg(\lambda)$.

Thus in all cases, we have that $f(\lambda {\rm i})= 0$ if and only if $g(\lambda)=0$.

Moreover, let $f(\lambda)= (\lambda^2 + q_1)(\lambda^2 + q_2)\dots(\lambda^2 + q_s)\lambda^{n-2s}$ for $0< q_1 \leqslant q_2 \leqslant \dots \leqslant q_s $.

When $n= 2m$, we have \[\begin{split}(-1)^mg(\lambda) & = f(\lambda {\rm i}){} = (\lambda^2 - q_1)(\lambda^2 - q_2)\dots(\lambda^2 - q_s)\lambda^{n-2s}(-1)^s(\rm i)^{n-2s}{}\\
&= (\lambda^2 - q_1)(\lambda^2 - q_2)\dots(\lambda^2 - q_s)\lambda^{n-2s}(-1)^m. \end{split}\] Thus, $g(\lambda)= (\lambda^2 - q_1)(\lambda^2 - q_2)\dots(\lambda^2 - q_s)\lambda^{n-2s}$.

When $n= 2m+1$, we have \[\begin{split}{\rm i}(-1)^mg(\lambda) & = f(\lambda {\rm i}) = (\lambda^2 - q_1)(\lambda^2 - q_2)\dots(\lambda^2 - q_s)\lambda^{n-2s}(-1)^s(\rm i)^{n-2s} \\
&= (\lambda^2 - q_1)(\lambda^2 - q_2)\dots(\lambda^2 - q_s)\lambda^{n-2s}(-1)^m{\rm i}. \end{split}\] Thus,  $g(\lambda)= (\lambda^2 - q_1)(\lambda^2 - q_2)\dots(\lambda^2 - q_s)\lambda^{n-2s}.$

Hence if $\mathrm{Spec}(f)= \begin{bmatrix} \lambda_0{\rm i}&\lambda_1{\rm i}&\dots&\lambda_p{\rm i}\\n_0&n_1&\dots&n_p \end{bmatrix}$, then $\mathrm{Spec}(g)= \begin{bmatrix} \lambda_0&\lambda_1&\dots&\lambda_p\\n_0&n_1&\dots&n_p \end{bmatrix}$.

 The proof of its converse statement is similar.
\end{proof}
A valued cluster quiver $(Q',v')$ is said to be obtained by re-orienting an arrow $\alpha$ from a valued cluster quiver $(Q,v)$ if $Q'_1 = \{\alpha^{op}\}\bigcup Q_1\setminus \{\alpha\}$, $(v'(\alpha^{op})_1,v'(\alpha^{op})_2)=(v(\alpha)_2,v(\alpha)_1)$ and $v'(\beta)=v(\beta)$ for $\beta \in Q_1\setminus\{\alpha\}$, where $\alpha^{op}$ is the opposite arrow of $\alpha$. Re-orientations of a valued cluster quiver by re-orienting a set of arrows are defined step by step. Then we have the following corollary.

\begin{Corollary}
Let $T$ be a full valued subquiver of a connected valued cluster quiver $Q$ such that:
\begin{enumerate}
\item[(i)] The underlying graph \={T} of $T$ is a tree.
\item[(ii)] There is only one vertex $x \in T_0$ connecting with the vertices in $Q_0\setminus T_0$.
\end{enumerate}
Then all re-orientations of the valued cluster quiver $Q$ by re-orienting $T$ and maintaining $T'$ unchanged share the same exchange polynomial, where $T'$ is a full valued subquiver of $Q$ determined by $Q_0\setminus T_0$.

In particular, if the underlying graph $\bar Q$ of $Q$ is a tree, then all re-orientations of $Q$ share the same exchange polynomial.
\end{Corollary}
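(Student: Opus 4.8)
The plan is to expand the exchange polynomial $\det(xI_n-B(Q))$ over the symmetric group $S_n$, just as in the proof of Proposition 3.6, and to verify term-by-term that re-orienting arrows inside $T$ leaves every summand unchanged. Writing $B(Q)=(b_{ij})$, the term of a permutation $\pi\in S_n$ is $\mathrm{sgn}(\pi)$ times $x$ raised to the number of fixed points of $\pi$ times $\prod_{i:\pi(i)\neq i}(-b_{i,\pi(i)})$, and it is nonzero only when, for every non-fixed point $i$ of $\pi$, the vertices $i$ and $\pi(i)$ are adjacent in $\bar Q$. Decomposing $\pi$ into disjoint cycles, both $\mathrm{sgn}(\pi)$ and the product factorize according to these cycles, so the term is a product of: $x$ for each fixed point, and a factor $-\,b_{i_1i_2}b_{i_2i_3}\cdots b_{i_\ell i_1}$ for each cycle $(i_1\,i_2\cdots i_\ell)$ of length $\ell\geq 2$, this factor being nonzero only if $i_1i_2\cdots i_\ell i_1$ is a cycle of $\bar Q$. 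Hence it suffices to show that, for every cycle $Z$ of $\bar Q$, this factor is unaffected by re-orienting $T$.

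First I would use hypothesis (ii): it says exactly that $x$ is a cut vertex of $\bar Q$ separating $T_0\setminus\{x\}$ from $Q_0\setminus T_0$. The key graph-theoretic step is then to show that every cycle $Z$ of $\bar Q$ is of one of three kinds: (a) contained in $T_0$; (b) contained in $Q_0\setminus T_0$; (c) passing through $x$ and otherwise contained in $Q_0\setminus T_0$. Indeed, a cycle meeting both $T_0\setminus\{x\}$ and $Q_0\setminus T_0$ would have to cross the cut vertex at least twice, which is impossible since a cycle visits each vertex once; and a cycle contained in $T_0$ is a cycle of the full subquiver $\bar T$.

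For a cycle $Z$ of kind (a): $\bar T$ being a tree forces $Z$ to have length $2$, say $Z=(i,j)$, with factor $-b_{ij}b_{ji}=v_{ij}v_{ji}$; this equals $c_{ij}c_{ji}$ in the notation of $C(Q)$ and so depends only on $\bar Q$, hence not on the orientation of the arrow between $i$ and $j$. For a cycle of kind (b) or (c), every entry $b_{pq}$ appearing has either $p,q\in Q_0\setminus T_0$ -- an entry of $B(T')$, unchanged since $T'$ is maintained -- or is of the form $b_{xu}$ or $b_{ux}$ with $u\in Q_0\setminus T_0$; but the arrow between $x$ and $u$ does not have both endpoints in $T_0$, so it is not an arrow of $T$ and is not re-oriented. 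Thus every cycle-factor, and therefore $\det(xI_n-B(Q))$, is unchanged by any re-orientation of arrows inside $T$ that keeps $T'$ fixed, which proves the first assertion. The ``in particular'' clause follows by taking $T=Q$, so that $T'$ is empty and (ii) imposes no restriction -- or, alternatively, directly from Proposition 3.6, since re-orienting changes neither $\bar Q$ nor $C(Q)$, and that proposition expresses the exchange spectrum of a tree quiver in terms of $C(Q)$.

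The main obstacle is the cycle classification in the second paragraph -- making rigorous that a cycle of $\bar Q$ cannot straddle the two sides of the cut vertex $x$; everything after that reproduces the bookkeeping of Proposition 3.6.
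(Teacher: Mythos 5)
Your proof is correct, but it takes a genuinely different route from the paper's. The paper blocks the exchange matrix as $\begin{bmatrix} X&-w^T&0\\\alpha&0&\beta\\0&-y^T&Y \end{bmatrix}$ (with $x$ as the middle index), expands the characteristic determinant along the row and column of $x$ to get $\lambda X(\lambda)Y(\lambda)+\sum_k\alpha_kw_kX_k(\lambda)Y(\lambda)+\sum_j\beta_jy_jY_j(\lambda)X(\lambda)$, observes that the products $\alpha_kw_k=-b_{xk}b_{kx}$ and $\beta_jy_j=-b_{xj}b_{jx}$ are orientation-independent, and then invokes Proposition 3.5 to conclude that $Y(\lambda)$ and the $Y_j(\lambda)$ (characteristic polynomials of tree quivers) are unchanged. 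You instead run the Leibniz/cycle expansion globally on $\det(\lambda I_n-B(Q))$ and show every cycle factor $-b_{i_1i_2}\cdots b_{i_\ell i_1}$ is a re-orientation invariant, using the cut-vertex property of $x$ to classify the cycles of $\bar Q$. Both work; your version is more self-contained (it does not need the block-determinant identity, and it re-proves rather than cites the tree case) and it isolates the real mechanism, namely that $2$-cycle weights $-b_{ij}b_{ji}=v_{ij}v_{ji}$ are orientation-independent while longer cycles never use re-oriented arrows. The paper's version is shorter given that Proposition 3.5 is already available. The step you flag as the main obstacle is routine: if a cycle $Z$ met both $T_0\setminus\{x\}$ and $Q_0\setminus T_0$, then deleting $x$ from $Z$ leaves a connected path in $\bar Q - x$ joining the two sides, contradicting that every edge between $T_0$ and $Q_0\setminus T_0$ is incident to $x$. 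Two small points: your reference should be to Proposition 3.5 rather than 3.6, and in the ``in particular'' clause, rather than taking $T=Q$ (which makes hypothesis (ii) vacuous in a slightly awkward way), it is cleaner to note that all cycles are then of your kind (a), or to cite Proposition 3.5 directly as you also suggest.
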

\begin{proof} Without loss of generality, we may suppose that $T'_0 = \{1, 2, \dots, m\}$, $T_0 = \{m+1, m+2, \dots, m+n\}$ and $x = m+1$. The exchange matrices of $T'$ and $T\setminus \{x\}$ are assumed to be $X_{m\times m}$ and $Y_{(n-1)\times (n-1)}$, respectively. Then the exchange matrix of $Q$ will be the following form:\[\begin{bmatrix} X&-w^T&0\\\alpha&0&\beta\\0&-y^T&Y \end{bmatrix}, \] where $\alpha = (\alpha_1, \alpha_2, \dots, \alpha_m)$ , $\beta = (\beta_1, \beta_2, \dots, \beta_{n-1})$,$w = (w_1, w_2, \dots, w_m)$ , and $y = (y_1, y_2, \dots, y_{n-1})$. Assume that characteristic polynomials of $X$ and $Y$ are $X(\lambda)$ and $Y(\lambda)$, respectively. Then the exchange polynomial of $Q$ is

\[\begin{split} &\begin{vmatrix} \lambda I_m - X&w^T&0\\-\alpha&\lambda&-\beta\\0&y^T&\lambda I_{n-1}-Y\end{vmatrix}\\ = &(\alpha_1w_1X_1(\lambda) + \alpha_2w_2X_2(\lambda) + \dots + \alpha_mw_mX_m(\lambda))Y(\lambda)\\ & + \lambda X(\lambda)Y(\lambda) + [\beta_1y_1Y_1(\lambda) + \beta_2y_2Y_2(\lambda) + \dots + \beta_{n-1}y_{n-1}Y_{n-1}(\lambda)]X(\lambda), \end{split} \]
where $X_k(\lambda)$ is the determinant of the principal sbumatrix of the matrix $\lambda I_m - X$ obtained by deleting the $k$-th row and $k$-th column, and $Y_j(\lambda)$ is the determinant of the principal sbumatrix of $\lambda I_{n-1} - Y$ obtained by deleting the $j$-th row and $j$-th column for any $k \in [1,m], j \in [1,n-1]$. Re-orientations of $Q$ with $T'$ unchanged will keep $X(\lambda)$, $X_1(\lambda)$, $\dots$, $X_m(\lambda)$, $\alpha_1w_1$, $\dots$, $\alpha_mw_m$, $\beta_1y_1$, $\dots$, $\beta_{n-1}y_{n-1}$ unchanged. It only needs to show that $Y(\lambda)$, $Y_1(\lambda)$, $\dots$, $Y_{n-1}(\lambda)$ stay unchanged, and this follows immediately from Proposition 3.5.
\end{proof}

For any orientation of a tree, we may get a tree cluster quiver. Recall that an {\bf induced subgraph}(or say, a {\bf full subgraph}) of a graph is a subgraph obtained from the original graph by keeping an arbitrary subset of vertices together with all the edges that have both endpoints in this subset. We have the following results for (tree) cluster quivers on exchange spectrum radii.
\begin{Corollary}The following assertions hold:
\begin{enumerate}
\item[(1)] Let $Q$ be a tree cluster quiver, then
\begin{enumerate}
\item[(i)] The exchange spectrum radius of $Q$ is less than two if and only if the underlying graph of $Q$ is one of Dynkin diagrams (see Figure \ref{Dynkin}).
\begin{figure}
\centering
\includegraphics[height=4cm,width=10cm]{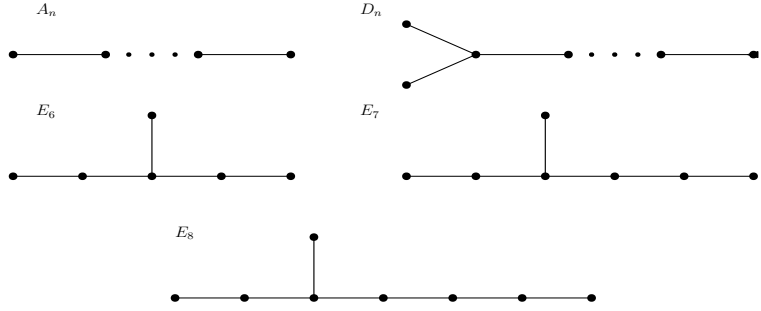}
\caption{Dynkin diagrams $A_n$, $D_n$, $E_6$, $E_7$, and $E_8$}
\label{Dynkin}
\end{figure}
\item[(ii)] The exchange spectrum radius of $Q$ is $2$ if and only if the underlying graph of $Q$ is one of the graphs $\hat{D_n}$ ($n\geqslant4$, with $n+1$ vertices in it), $\hat{E_6}$, $\hat{E_7}$, or $\hat{E_8}$ (see Figure \ref{ADynkin}).
\begin{figure}
\centering
\includegraphics[height=5cm,width=10cm]{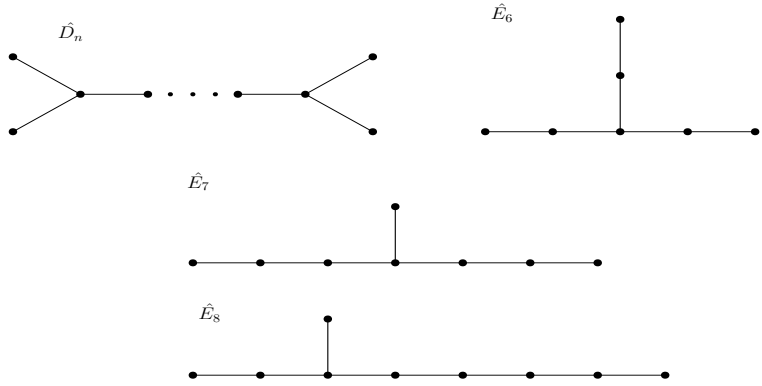}
\caption{Acyclic extended Dynkin diagrams $\hat{D_n}$, $\hat{E_6}$, $\hat{E_7}$, and $\hat{E_8}$}
\label{ADynkin}
\end{figure}
\item[(iii)] The exchange spectrum radius of $Q$ is more than two if and only if the underlying graph of $Q$ contains $\hat{D_n} (n\geqslant4)$, $\hat{E_6}$, $\hat{E_7}$, or $\hat{E_8}$ as a proper induced subgraph.
\end{enumerate}
\item[(2)] Let $Q$ be a cluster quiver with exchange spectrum radius more than two, then $\bar{Q}$ either contains $X_2$, $\hat{A_n}$ $(n\geqslant2)$, $\hat{D_n}$ $(n\geqslant4)$, $\hat{E_6}$, $\hat{E_7}$, or $\hat{E_8}$ as a proper induced subgraph, or contains $X_n(n\geqslant3)$ as an induced subgraph, where $\hat{A_n}$ is a simple chordless $(n+1)$-cycle, and $X_n$ is a graph with two vertices and $n$ edges.
\end{enumerate}
\end{Corollary}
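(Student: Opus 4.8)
The plan is to translate both parts into statements about the adjacency spectral radius $\rho(\bar Q)$ of the underlying (valued, possibly multi-) graph and then feed them into the classical classification of connected graphs by spectral radius. The external ingredient is Smith's theorem (see, for instance, \cite{BH}): a connected simple graph has adjacency spectral radius $<2$ exactly when it is one of the Dynkin diagrams $A_n,D_n,E_6,E_7,E_8$, and $=2$ exactly when it is an extended Dynkin diagram; among the latter the trees are precisely $\hat D_n\ (n\geqslant 4),\hat E_6,\hat E_7,\hat E_8$, the simple chordless cycles are the $\hat A_n\ (n\geqslant 2)$, and for multigraphs the only further one with $\rho=2$ is the double edge $X_2$, while an edge of multiplicity $k$ already forces $\rho\geqslant k$. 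The internal ingredients are Proposition~3.3, giving $\mathrm{Radi}(Q)\leqslant\rho(\bar Q)$ for any valued cluster quiver, and Proposition~3.5, which upgrades this to $\mathrm{Radi}(Q)=\rho(\bar Q)$ whenever $\bar Q$ is a tree; for a tree cluster quiver $C(Q)$ is simply the ordinary adjacency matrix of the tree $\bar Q$.

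For part (1) I would note first that $\mathrm{Radi}(Q)=\rho(\bar Q)$ by Proposition~3.5, so that (i) and (ii) are immediate from Smith's theorem, using that each $A_n,D_n,E_6,E_7,E_8$ is a tree and that the only extended Dynkin diagrams that are trees are $\hat D_n\ (n\geqslant 4),\hat E_6,\hat E_7,\hat E_8$. For (iii), if $\rho(\bar Q)>2$ then by (i) and (ii) the tree $\bar Q$ is neither Dynkin nor extended Dynkin, and I would invoke the classical fact that the minimal trees of spectral radius $\geqslant 2$ are exactly $\hat D_n\ (n\geqslant 4),\hat E_6,\hat E_7,\hat E_8$ (equivalently: a tree containing none of them as a subtree is Dynkin), so $\bar Q$ contains one of them as a proper induced subgraph. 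Conversely, if $\bar Q$ properly contains one of these diagrams $G'$ as an induced subgraph, then $C(G')$ is a proper principal submatrix of the nonnegative matrix $C(\bar Q)$, which is irreducible because $\bar Q$ is connected; the Cauchy interlacing inequality gives $\rho(\bar Q)\geqslant\rho(G')=2$, and the strict Perron--Frobenius monotonicity of the spectral radius of an irreducible nonnegative matrix under passage to a proper principal submatrix makes this strict, hence $\mathrm{Radi}(Q)=\rho(\bar Q)>2$.

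For part (2) I would reduce and then split into cases. Assuming $Q$ connected, Proposition~3.3 gives $\rho(\bar Q)\geqslant\mathrm{Radi}(Q)>2$, so $\bar Q$ is not among the connected (multi)graphs of spectral radius $\leqslant 2$ listed above. If some two vertices of $\bar Q$ are joined by $k\geqslant 3$ arrows, the subgraph they induce is $X_k$, which is the second alternative. Otherwise all multiplicities are at most $2$; then $\bar Q$ has at least three vertices, since a two-vertex multigraph with multiplicities $\leqslant 2$ has $\rho\leqslant 2$. If $\bar Q$ has a double edge, its two endpoints induce $X_2=\hat A_1$, a proper induced subgraph. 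If $\bar Q$ is simple and contains a cycle, then a shortest cycle of $\bar Q$ is chordless, hence is some $\hat A_n\ (n\geqslant 2)$, and it is proper since $\rho(\hat A_n)=2<\rho(\bar Q)$. If $\bar Q$ is simple and acyclic, then being connected it is a tree, so $Q$ is a tree cluster quiver and part (1)(iii) supplies one of $\hat D_n,\hat E_6,\hat E_7,\hat E_8$ as a proper induced subgraph. In each case $\bar Q$ contains one of the claimed configurations.

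The step I expect to be the main obstacle is the ``minimality'' input used in the forward direction of (1)(iii) and in the tree branch of (2): that a connected graph of spectral radius $>2$ must contain one of the prescribed diagrams as a \emph{proper induced} subgraph. For trees this is the classical determination of the minimal non-Dynkin trees, which if needed I would reprove by a descent argument (delete leaves one at a time, keeping the current graph a tree of spectral radius $\geqslant 2$, and identify the terminal tree via Smith's theorem); for graphs with a cycle it reduces to the remark that a shortest cycle is chordless, and the multi-edge bookkeeping is routine. The one other point needing care is that Cauchy interlacing alone only gives $\rho(\bar Q)\geqslant 2$, so the strict inequality in the converse of (1)(iii) genuinely relies on connectedness of $\bar Q$ (irreducibility of $C(\bar Q)$) together with the strict form of Perron--Frobenius monotonicity.
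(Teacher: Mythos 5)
Your proposal is correct and follows essentially the same route as the paper: the paper's proof of part (1) simply invokes Proposition 3.5 (equality of the exchange and adjacency spectral radii for trees) together with the classical Smith classification of connected graphs with spectral radius at most $2$ (cited there as \cite{LS} and Theorem 3.1.3 of \cite{BH}), and part (2) likewise combines part (1), Proposition 3.3, and the same classification. You have merely written out the routine details (interlacing, strict Perron--Frobenius monotonicity, and the case split on edge multiplicities and cycles) that the paper leaves implicit.
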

\begin{proof} (1) It follows from the Proposition 3.5 that the exchange spectrum radius of a valued cluster quiver equals to the adjacency spectrum radius of its underlyling graph if its underlying graph is a tree. Then the conclusion follows from results in \cite{LS}(refer also to Theorem 3.1.3 in \cite{BH}).

(2) This assertion follows from (1), Proposition 3.3 and results in \cite{LS}.
\end{proof}
Dynkin diagrams have appeared in many branches of mathematics, for example, in the classification of finite type of cluster algebras, finite dimensional associated algebras, and Lie algebras. It is also interesting to see they can be associated with exchange matrices of valued cluster quivers.

\subsection{Exchange spectra of a valued cluster quiver and full valued subquivers}.

In this subsection, we make use of Cauchy's interlacing theorem for symmetric matrices to prove a similar result for skew-symmetrizable matrices, and we use this result to compare the exchange radii of valued cluster quivers and their full valued subquivers.
\begin{Theorem}
Let $Q$ be a valued cluster quiver with $|Q_0|=n$ and $Q'$ be a full valued subquiver order $n-1$ of $Q$. If the eigenvalues of $B(Q)$ are $\lambda_1{\rm i}, \lambda_2{\rm i}, \dots, \lambda_n{\rm i}$ with $\lambda_1 \geqslant \lambda_2 \geqslant \dots \geqslant \lambda_n$ and the eigenvalues of $B(Q')$ are $\gamma_2{\rm i}, \gamma_3{\rm i}, \dots , \gamma_n{\rm i}$ with $\gamma_2 \geqslant \gamma_3 \geqslant \dots \geqslant \gamma_n$, then $\lambda_1 \geqslant \gamma_2 \geqslant \lambda_2 \geqslant \gamma_3 \geqslant \lambda_3 \geqslant \dots \geqslant \gamma_n \geqslant \lambda_n$.
\end{Theorem}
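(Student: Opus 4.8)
The plan is to transfer the statement to ordinary Hermitian (equivalently, real symmetric) matrices, where the classical Cauchy interlacing theorem is available, while making sure that passing to the full valued subquiver $Q'$ corresponds on the Hermitian side to passing to a principal submatrix.

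First I would fix a diagonal matrix $D=\mathrm{diag}(d_1,\dots,d_n)$ with all $d_i>0$ such that $DB(Q)$ is skew-symmetric, and set $\tilde B=D^{1/2}B(Q)D^{-1/2}$; as recalled just before Lemma 3.1, $\tilde B$ is real, skew-symmetric and similar to $B(Q)$, so its eigenvalues are again $\lambda_1{\rm i},\dots,\lambda_n{\rm i}$. Then $H:=-{\rm i}\tilde B$ is Hermitian (indeed $H^*=\overline{-{\rm i}}\,\tilde B^{\,*}={\rm i}(-\tilde B)=H$, using $\tilde B^{\,*}=\tilde B^{T}=-\tilde B$), and from $\tilde Bx=\lambda_j{\rm i}\,x$ we get $Hx=\lambda_j x$; hence $H$ has real spectrum $\lambda_1\geqslant\lambda_2\geqslant\dots\geqslant\lambda_n$.

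Next I would realize $B(Q')$ inside $B(Q)$ coherently with this conjugation. By Lemma 2.4, after conjugating $B(Q)$ by a suitable permutation matrix (which affects none of the eigenvalues in question) we may assume $Q'_0=\{1,\dots,n-1\}$ and that $B(Q')$ is the principal submatrix of $B(Q)$ obtained by deleting the $n$-th row and column. Put $D'=\mathrm{diag}(d_1,\dots,d_{n-1})$. Since the skew-symmetry relations $d_ib_{ij}=-d_jb_{ji}$ persist when $i,j$ are restricted to $\{1,\dots,n-1\}$, the matrix $D'B(Q')$ is skew-symmetric, so $\tilde B':=(D')^{1/2}B(Q')(D')^{-1/2}$ is real skew-symmetric and similar to $B(Q')$, with eigenvalues $\gamma_2{\rm i},\dots,\gamma_n{\rm i}$. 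The crucial observation is that entrywise $(\tilde B)_{ij}=d_i^{1/2}b_{ij}d_j^{-1/2}$, so restricting $i,j$ to $\{1,\dots,n-1\}$ shows that $\tilde B'$ is \emph{exactly} the principal submatrix of $\tilde B$ obtained by deleting the last row and column; consequently $H':=-{\rm i}\tilde B'$ is the corresponding principal submatrix of $H$, and it is Hermitian with spectrum $\gamma_2\geqslant\dots\geqslant\gamma_n$.

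Finally, Cauchy's interlacing theorem applied to the Hermitian matrix $H$ and its order-$(n-1)$ principal submatrix $H'$ gives $\lambda_k\geqslant\gamma_{k+1}\geqslant\lambda_{k+1}$ for $k=1,\dots,n-1$, that is, $\lambda_1\geqslant\gamma_2\geqslant\lambda_2\geqslant\gamma_3\geqslant\dots\geqslant\gamma_n\geqslant\lambda_n$. If one prefers to invoke only the real symmetric form of the theorem, one applies it instead to the real symmetric $2n\times 2n$ matrix $\left(\begin{smallmatrix}0&\tilde B\\-\tilde B&0\end{smallmatrix}\right)$, whose spectrum is each $\lambda_j$ with multiplicity doubled and for which deleting the index-$n$ row/column of $H$ becomes deletion of two rows/columns; the interlacing inequalities for the doubled spectra then collapse to the same chain. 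The only point needing care is the coherence emphasized above — that the symmetrizing conjugation commutes with passage to the subquiver — and once that is checked everything else is routine; so I expect that bookkeeping step to be the main (and rather minor) obstacle.
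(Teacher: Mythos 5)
Your proof is correct, and it reaches the conclusion by a more direct route than the paper's. Both arguments reduce to Cauchy's interlacing theorem for Hermitian matrices, and both rely on the same key coherence fact that you rightly flag: the symmetrizing conjugation $B\mapsto D^{1/2}BD^{-1/2}$ commutes with passing to the principal submatrix indexed by $Q_0'$, so the subquiver's exchange matrix becomes a genuine principal submatrix on the skew-symmetric side. Where you diverge is in how the Hermitian matrices are produced. You simply observe that $H=-{\rm i}\tilde B$ is Hermitian with real spectrum $\lambda_1\geqslant\dots\geqslant\lambda_n$ and that $H'=-{\rm i}\tilde B'$ is its principal submatrix, so interlacing applies in one line. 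The paper instead first diagonalizes the skew-symmetric submatrix $B_1'$ by a unitary $T$, writes the resulting arrowhead matrix, expands its characteristic polynomial $f(\lambda)$ explicitly, substitutes $\lambda=x{\rm i}$ to identify $f(x{\rm i})$ with ${\rm i}^n g(x)$ where $g$ is the characteristic polynomial of an auxiliary real symmetric arrowhead matrix $M$ with diagonal $0,\gamma_2,\dots,\gamma_n$, and then applies interlacing to $M$ and its diagonal principal submatrix $\mathrm{diag}(\gamma_2,\dots,\gamma_n)$. Your version avoids this computation entirely and is the cleaner argument; the paper's computation has the mild side benefit of exhibiting the exchange polynomial concretely in terms of the subquiver's spectrum, in the same spirit as its Proposition 3.5. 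Your closing alternative via the real symmetric $2n\times 2n$ block matrix also works (its spectrum is $\{\pm\lambda_j\}$, which coincides with each $\lambda_j$ doubled because the skew-symmetric spectrum is symmetric about $0$), but it needs the two-step deletion and the collapse of the doubled interlacing chain spelled out, so the Hermitian version you give first is preferable.
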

\begin{proof} Since $B=B(Q)$ is a real skew-symmetrizable matrix and $B'=B(Q')$ is a principal submatrix of order $n-1$ of $B(Q)$, there exists a diagonal matrix $D$ with positive diagonal entries such that $DB$ is skew-symmetric. It is clear that $B_1 = D^{\frac{1}{2}}BD^{-\frac{1}{2}}$ and $B'_1 = D_1^{\frac{1}{2}}B'D_1^{-\frac{1}{2}}$ are skew-symmetric, where $D_1$ is the corresponding principal submatrix of $D$ such that $D_1B'$ is skew-symmetric. It is also obvious that there is a permutation matrix $P$ such that \[PB_1P^T= \begin{bmatrix} 0&\alpha^*\\-\alpha&B'_1 \end{bmatrix}, \] where $\alpha^*$ is the conjugate transpose of the vector $\alpha$. Since $B'_1$ is a real skew-symmetric matrix, there exists an unitary matrix $T$ such that $TB'_1T^*$ is a diagonal matrix, ~i.~e. \[TB'_1T^* = \begin{bmatrix} \gamma_2{\rm i}&&&\\&\gamma_3{\rm i}&&\\&&\ddots &\\&&&\gamma_n{\rm i} \end{bmatrix},\]where $T^*$ is the conjugate transpose of $T$. Now let  a matrix $H = \begin{bmatrix} 1&0\\0&T \end{bmatrix}$. Then we have \[ HPB_1P^TH^* = \begin{bmatrix} 1&0\\0&T \end{bmatrix}\begin{bmatrix}0&\alpha^*\\-\alpha&B'_1 \end{bmatrix}\begin{bmatrix} 1&0\\0&T^* \end{bmatrix} = \begin{bmatrix} 0&\alpha^*T^*\\-T\alpha&TB'_1T^* \end{bmatrix} = \begin{bmatrix} 0&\beta^*\\-\beta&TB'_1T^* \end{bmatrix},\]where $\beta = T\alpha = (\beta_2 , \beta_3 , \dots , \beta_n)^T$. Assume that the characteristic polynomial of $HPB_1P^TH^*$ is $f(\lambda)$, then
\[\begin{split} f(\lambda) = |\lambda I_n - HPB_1P^TH^*| &= \begin{vmatrix} \lambda &-\beta^* \\ \beta &\lambda I_{n-1} - TB'_1T^* \end{vmatrix} = \begin{vmatrix} \lambda &-\bar{\beta_2}& -\bar{\beta_3}& \dots & -\bar{\beta_n} \\ \beta_2&\lambda - \gamma_2{\rm i}&0&\dots&0\\ \beta_3&0&\lambda - \gamma_3{\rm i}&\dots&0\\ \vdots&\vdots&\vdots&\ddots&\vdots\\ \beta_n&0&0&\dots&\lambda - \gamma_n{\rm i} \end{vmatrix}. \end{split}\]
Expanding the above determinant along the first row, we get
\[  f(\lambda) = \lambda(\lambda - \gamma_2{\rm i})\dots(\lambda-\gamma_n{\rm i})+ \sum_{k=2}^{n}|\beta_k|^2(\lambda-\gamma_2{\rm i})\dots\widehat{(\lambda-\gamma_k{\rm i})}\dots(\lambda-\gamma_n{\rm i}),  \]
where $\widehat{(x-\gamma_k)}$ means deleting this term. Thus it follows that
 \[\begin{split} f(x{\rm i})=&(x{\rm i})(x{\rm i}-\gamma_2{\rm i})\dots (x{\rm i}-\gamma_n{\rm i})+ \sum_{k=2}^{n}|\beta_k|^2(x{\rm i}-\gamma_2{\rm i})\dots\widehat{(x{\rm i}-\gamma_k{\rm i})}\dots(x{\rm i}-\gamma_n{\rm i})\\ =& {\rm i}^nx(x-\gamma_2)\dots (x-\gamma_n)+ {\rm i}^{n-2}\sum_{k=2}^{n}|\beta_k|^2(x-\gamma_2)\dots\widehat{(x-\gamma_k)}\dots(x-\gamma_n)\\=&{\rm i}^ng(x), \end{split}\]
 where $g(x)= x(x-\gamma_2)\dots(x-\gamma_n)- \sum_{k=2}^{n}|\beta_k|^2(x-\gamma_2)\dots\widehat{(x-\gamma_k)}\dots(x-\gamma_n)$. It is easy to see that $g(x)$ is the characteristic polynomial of the real symmetric matrix $M$ defined as follows.\[ M = \begin{bmatrix} 0&|\beta_2|&\dots&|\beta_n|\\|\beta_2|&\gamma_2&\dots&0\\\vdots&\vdots&\ddots&\vdots\\|\beta_n|&0&\dots&\gamma_n \end{bmatrix}.\]Similar to the proof of Proposition 3.5, it is not difficult to see that the eigenvalues of the matrix $B$ are $\lambda_1{\rm i}$, $\lambda_2{\rm i}$, $\dots$ , $\lambda_n{\rm i}$ if and only if the eigenvalues of the matrix $M$ are $\lambda_1$, $\lambda_2$, $\dots$ , $\lambda_n$. According to the Cauchy's interlacing theorem for Hermitian matrices (see e.g. \cite{HJ}), our proof is finished. \end{proof}

\begin{Corollary}
Let $Q$ be a valued cluster quiver with $|Q_0| = n$ and $Q'$ be a full valued subquiver of order $m(<n)$ of $Q$. Suppose that the eigenvalues of $B(Q)$ are $\lambda_1{\rm i}, \lambda_2{\rm i}, \dots, \lambda_n{\rm i}$ with $\lambda_1 \geqslant \lambda_2 \geqslant \dots \geqslant \lambda_n$, and the eigenvalues of $B(Q')$ are $\gamma_1{\rm i}, \gamma_2{\rm i}, \dots,\gamma_m{\rm i}$ with $\gamma_1 \geqslant \gamma_2 \geqslant \dots \geqslant \gamma_m$. Then $\lambda_j \geqslant \gamma_j \geqslant \lambda_{n-m+j}$ for $j= 1, 2, \dots, m$.

In particular, the exchange spectrum radius of $Q$ is either larger than or equal to that of $Q'$.
\end{Corollary}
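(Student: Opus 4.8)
The plan is to reduce everything to Theorem 3.10 by an induction on the codimension $n-m$. Since the notion of full valued subquiver is just that of an induced subgraph carrying the restricted valuation, it is transitive: a full valued subquiver of a full valued subquiver of $Q$ is again a full valued subquiver of $Q$. Hence I would fix any ordering of the $n-m$ vertices of $Q_0\setminus Q'_0$ and delete them one at a time, obtaining a tower
\[
Q = Q^{(0)} \supset Q^{(1)} \supset \cdots \supset Q^{(n-m)} = Q',
\]
where each $Q^{(k)}$ is a full valued subquiver of $Q$ of order $n-k$ and $Q^{(k+1)}$ is a full valued subquiver of $Q^{(k)}$ of order one less. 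Writing the eigenvalues of $B(Q^{(k)})$ as $\mu^{(k)}_1{\rm i}, \mu^{(k)}_2{\rm i}, \dots, \mu^{(k)}_{n-k}{\rm i}$ with $\mu^{(k)}_1 \geqslant \mu^{(k)}_2 \geqslant \cdots \geqslant \mu^{(k)}_{n-k}$, we have $\mu^{(0)}_j = \lambda_j$ and $\mu^{(n-m)}_j = \gamma_j$.

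The key step is to apply Theorem 3.10 to each consecutive pair $Q^{(k)} \supset Q^{(k+1)}$. After the harmless re-indexing between the $\gamma_2,\dots,\gamma_{n-k}$ labelling used there and the $\gamma_1,\dots,\gamma_{n-k-1}$ labelling used here, the interlacing statement of Theorem 3.10 reads
\[
\mu^{(k)}_j \;\geqslant\; \mu^{(k+1)}_j \;\geqslant\; \mu^{(k)}_{j+1}
\]
for every $k$ and every admissible index $j$. Then I would chain these inequalities in two directions. Reading the left-hand inequality straight down the tower yields $\lambda_j = \mu^{(0)}_j \geqslant \mu^{(1)}_j \geqslant \cdots \geqslant \mu^{(n-m)}_j = \gamma_j$, i.e.\ $\lambda_j \geqslant \gamma_j$. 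Reading the right-hand inequality down the tower while shifting the lower index by one at each step yields $\gamma_j = \mu^{(n-m)}_j \geqslant \mu^{(n-m-1)}_{j+1} \geqslant \mu^{(n-m-2)}_{j+2} \geqslant \cdots \geqslant \mu^{(0)}_{j+(n-m)} = \lambda_{n-m+j}$, i.e.\ $\gamma_j \geqslant \lambda_{n-m+j}$; one checks along the way that all indices stay in range precisely because $1 \leqslant j \leqslant m$. Together this gives $\lambda_j \geqslant \gamma_j \geqslant \lambda_{n-m+j}$ for $j = 1, 2, \dots, m$.

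For the final assertion I would invoke Lemma 3.1: the eigenvalues of $B(Q)$ are purely imaginary and occur in conjugate pairs, so the reals $\lambda_1 \geqslant \cdots \geqslant \lambda_n$ are symmetric about $0$ and $\mathrm{Radi}(Q) = \lambda_1$; likewise $\mathrm{Radi}(Q') = \gamma_1$. Specializing the inequality above to $j=1$ gives $\mathrm{Radi}(Q) = \lambda_1 \geqslant \gamma_1 = \mathrm{Radi}(Q')$.

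I do not expect an essential obstacle: the whole analytic content sits in Theorem 3.10, and what remains is purely combinatorial bookkeeping. The only place that needs a little care is the second ("diagonal") chain producing the lower bound $\gamma_j \geqslant \lambda_{n-m+j}$, where one must track the index shift at each level and confirm that it never exceeds the size of the corresponding intermediate quiver; this is exactly the standard way Cauchy interlacing propagates when several rows and columns are removed, so it should go through without difficulty.
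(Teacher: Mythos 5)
Your proposal is correct and matches the paper's proof, which likewise deduces the corollary by iterated application of the one-step interlacing theorem (Theorem 3.8 in the paper's numbering) to a chain of full valued subquivers each obtained by deleting one vertex; you simply spell out the index bookkeeping that the paper leaves implicit. Your justification of the final assertion via $\mathrm{Radi}(Q)=\lambda_1$ (using the symmetry of the spectrum about $0$ from Lemma 3.1) is also sound.
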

\begin{proof} Because the exchange matrix of the full valued subquiver $Q'$ is a principal submatrix of the exchange matrix of $Q$, the conclusion follows from iterated applications of Theorem 3.8.
\end{proof}

\section{Mutation invariant of spectrum of a cluster quiver}

In this section, we study mutation invariants of skew-symmetric matrices and cluster quivers under the meaning of spectrum.

As a special case of Definition 2.3, mutation of cluster quivers can be equivalently defined as follows.
\begin{Definition}[\cite{M}]
Let $Q$ be a cluster quiver and $k\in Q_0$ be a fixed vertex. The mutation $\mu_k(Q)$ of $Q$ at $k$ is defined as follows:
\begin{enumerate}
\item[(1)] For every 2-path $i\rightarrow k\rightarrow j$, add a new arrow $i\rightarrow j$;
\item[(2)] Reverse all arrows incident with $k$;
\item[(3)] Delete a maximal collection of 2-cycles from those created in $(1)$.
\end{enumerate}
\end{Definition}
Note that parallel arrows are considered as different arrows in the first step in Definition 4.1.
\subsection{Cospectral relationship of cluster quivers and seeds}.

Two cluster quivers $Q$ and $Q'$ are called {\bf cospectral} if they share the same exchange polynomial. Two mutation equivalent seeds $\Sigma=(\mathbf{x},\mathbf{y},Q)$ and $\Sigma'=(\mathbf{x'},\mathbf{y'},Q')$ are said to be {\bf cospectral} if $Q$ and $Q'$ are cospectral. In this case, we call clusters $\mathbf{x}$ and $\mathbf{x'}$ {\bf cospectral} and denoted by $\mathbf{x}\thicksim_c \mathbf{x'}$.

Firstly, we consider the condition for cluster quivers to be cospectral in a mutation class.
\begin{Lemma}
Let $Q$ be a cluster quiver and $B$ is its exchange matrix. If $k\in Q_0$ is a sink or source, then the quivers $Q$ and $\mu_k(Q)$ are cospectral.
\end{Lemma}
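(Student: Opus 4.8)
The plan is to reduce the statement to a similarity of matrices. The starting observation is that when $k$ is a sink or a source, there are no $2$-paths $i\to k\to j$ in $Q$: if $k$ is a sink then no arrow starts at $k$, and if $k$ is a source then no arrow ends at $k$. Hence step (1) (and therefore step (3)) in Definition 4.1 is vacuous, so $\mu_k(Q)$ is obtained from $Q$ merely by reversing every arrow incident with $k$. In terms of exchange matrices this says $B':=B(\mu_k(Q))=\mu_k(B)$ satisfies $b'_{ij}=b_{ij}$ whenever $i,j\neq k$, and $b'_{ik}=-b_{ik}$, $b'_{kj}=-b_{kj}$ for all $i,j$.

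To justify the claim $b'_{ij}=b_{ij}$ for $i,j\neq k$ directly from the mutation formula, recall $b'_{ij}=b_{ij}+\mathrm{sgn}(b_{ik})[b_{ik}b_{kj}]_+$. If $k$ is a sink, then there is no arrow starting at $k$, so $b_{kj}\leqslant 0$ for all $j$ (equivalently $b_{ik}\geqslant 0$ for all $i$); hence $b_{ik}b_{kj}\leqslant 0$ and $[b_{ik}b_{kj}]_+=0$. If $k$ is a source, then $b_{ik}\leqslant 0$ for all $i$, so again $b_{ik}b_{kj}\leqslant 0$ and the correction term vanishes. In either case $b'_{ij}=b_{ij}$ for $i,j\neq k$, as asserted.

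Now let $E_k=\mathrm{diag}(1,\dots,1,-1,1,\dots,1)$ be the diagonal matrix of order $n$ whose only entry $-1$ sits in position $(k,k)$. The description of $B'$ above is exactly the identity $B'=E_kBE_k$: multiplying on the left by $E_k$ flips the sign of row $k$, multiplying on the right by $E_k$ flips the sign of column $k$, and the entry $b_{kk}=0$ is unaffected. Since $E_k^{-1}=E_k$, this exhibits $B'$ as similar to $B$, namely $B'=E_kBE_k^{-1}$. Consequently $\det(xI_n-B')=\det(xI_n-B)$, i.e. $Q$ and $\mu_k(Q)$ have the same exchange polynomial, which is precisely the assertion that they are cospectral. (Alternatively, one may note that $E_k$ is a special case of the matrix $W$ of \eqref{pmatrix} when $\xi=\mathbf 0$ or $\eta=\mathbf 0$, which happens exactly when $k$ is a sink or a source, and use $\mu_k(B)=WBW^T$ together with $W^T=W^{-1}$ in this case.)

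There is essentially no obstacle here; the only point requiring a moment's care is the sign bookkeeping that makes the mutation correction term vanish at a sink or source, and that is settled in the second paragraph above.
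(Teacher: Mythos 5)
Your proof is correct and follows essentially the same route as the paper: both observe that at a sink or source the mutation only reverses arrows at $k$, so $\mu_k(B)=J_kBJ_k$ with $J_k$ the diagonal sign-flip matrix (your $E_k$), which is a similarity and hence preserves the characteristic polynomial. Your extra paragraph verifying that the correction term $[b_{ik}b_{kj}]_+$ vanishes is a welcome elaboration of a step the paper leaves implicit, but it does not change the argument.
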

\begin{proof} If $k\in Q_0$ is a sink or source, there are no $2$-paths of the form $i\rightarrow k\rightarrow j$, $\mu_k(Q)$ is obtained from $Q$ just by reversing all arrows incident with $k$. Then the exchange matrix of $\mu_k(Q)$ is given by \[\mu_k(B) = J_kBJ_k,\]where $J_k$ is the diagonal matrix obtained from the identity matrix by replacing the $(k,k)$-entry by $-1$. Obviously, the quivers $Q$ and $\mu_k(Q)$ are cospectral.
\end{proof}
The following example shows that in general, the converse of Lemma 4.2 is not true.
\begin{Example}
Let us consider the cluster quiver in Figure \ref{Exa}(a):
\begin{figure}
\centering
\includegraphics[height=3cm,width=10cm]{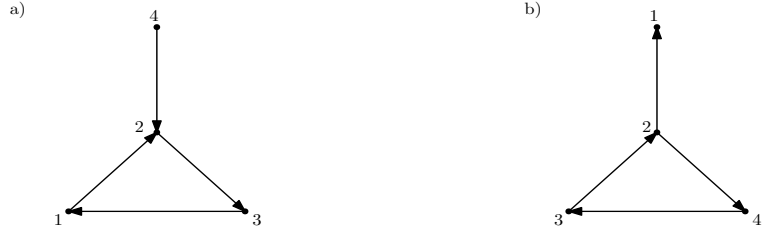}
\caption{An example}
\label{Exa}
\end{figure}
Mutating at vertex $2$, we get a quiver in Figure \ref{Exa}(b). It is obvious that $\mu_2(Q)$ and $Q$ have the same exchange polynomial, see Remark 4.12. However the vertex $2$ is neither a sink nor a source.
\end{Example}

If we consider cluster quivers without $3$-cycles, we have a better result. To prove the desired result, we need the following lemma.
\begin{Lemma}
Let $B(Q)=(b_{ij})_{n\times n}$ be the exchange matrix of a cluster quiver $Q$. The exchange polynomial of $Q$ is $f(\lambda)=|\lambda I_n -B(Q)|= \lambda^n + b_1\lambda^{n-1} + \dots +b_{n-1}\lambda + b_n$, then

(i)\; $b_{2k-1} = 0$, $1 \leq k \leq \frac{n+1}{2}, k \in \mathbb Z$; \;\;\;\;
(ii)\; $b_2 = \sum_{i<j}b_{ij}^2 = \sum_{j<i}b_{ij}^2$.
\\
In particular, if $Q$ is a simply-laced cluster quiver, then $b_2$ equals to the number of arrows in $Q$.
\end{Lemma}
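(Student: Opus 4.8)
The plan is to exploit the fact that, for a cluster quiver $Q$ (the skew-symmetric case, as opposed to a general valued cluster quiver), the exchange matrix $B=B(Q)$ is an honest real skew-symmetric matrix, so that \emph{every} principal submatrix of $B$ is again skew-symmetric. The engine of the argument is the classical identity expressing the coefficients of the characteristic polynomial through principal minors: writing $f(\lambda)=\lambda^n+b_1\lambda^{n-1}+\cdots+b_n$, one has $b_s=(-1)^s\sigma_s$, where $\sigma_s$ denotes the sum of all $s\times s$ principal minors of $B$ (exactly as used in the proof of Proposition \ref{acyclic}).

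For part (i) I would take $s=2k-1$ odd and note that each $s\times s$ principal minor of $B$ is the determinant of a skew-symmetric matrix of odd order; by Lemma 3.1(i) every such minor is $0$, hence $\sigma_{2k-1}=0$ and therefore $b_{2k-1}=0$ for $1\le k\le\frac{n+1}{2}$. Equivalently, one may argue directly from $B^{T}=-B$ that $f(\lambda)=\det(\lambda I_n-B)=\det(\lambda I_n+B)=(-1)^nf(-\lambda)$, so $f$ is an even or odd polynomial according to the parity of $n$, and in either case all the coefficients $b_{2k-1}$ vanish.

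For part (ii) I would compute $b_2=\sigma_2$ term by term. A generic $2\times 2$ principal minor of $B$ indexed by $i<j$ equals $\det\left(\begin{smallmatrix} b_{ii}&b_{ij}\\ b_{ji}&b_{jj}\end{smallmatrix}\right)=-b_{ij}b_{ji}=b_{ij}^2$, using $b_{ii}=b_{jj}=0$ and $b_{ji}=-b_{ij}$. Summing over all pairs $i<j$ gives $b_2=\sum_{i<j}b_{ij}^2$, and since $b_{ij}^2=b_{ji}^2$ the same number equals $\sum_{j<i}b_{ij}^2$. Finally, if $Q$ is simply-laced then $|b_{ij}|\le 1$ for all $i,j$, so $b_{ij}^2\in\{0,1\}$ and equals $1$ exactly when $i$ and $j$ are joined by an arrow; since a cluster quiver has no $2$-cycles there is at most one arrow between each pair of vertices, so $\sum_{i<j}b_{ij}^2$ is precisely the number of arrows of $Q$.

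I do not anticipate a genuine obstacle: both statements fall out immediately from skew-symmetry once the principal-minor identity is in place. The only points requiring a little care are the bookkeeping of signs (the $(-1)^s$ factor, and the evenness/oddness of $f$) and remembering that for a cluster quiver the exchange matrix is skew-symmetric rather than merely skew-symmetrizable — this is exactly what forces the odd principal minors to vanish and makes the $2\times 2$ ones collapse to $b_{ij}^2$.
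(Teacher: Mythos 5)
Your proposal is correct and follows essentially the same route as the paper: both arguments rest on the identity $b_s=(-1)^s\sigma_s$ for the sum $\sigma_s$ of $s\times s$ principal minors, invoke Lemma 3.1(i) to kill the odd-order minors for part (i), and compute the $2\times 2$ principal minors as $-b_{ij}b_{ji}=b_{ij}^2$ for part (ii) and the simply-laced count. Your alternative derivation of (i) from $f(\lambda)=(-1)^nf(-\lambda)$ is a harmless extra but changes nothing of substance.
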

\begin{proof} Since the coefficient $b_k$ of the characteristic polynomial equals to the sum of all principal minors of order $k$ multiplying by $(-1)^k$, by Lemma 3.1, the determinants of skew-symmetric matrices of odd orders are zeros, then (i) is true. The principal minor of order two must be of the form \[ \begin{vmatrix} 0&b_{ij}\\-b_{ij}&0 \end{vmatrix}, \]where $1 \leqslant i < j \leqslant n$. Then (ii) is also true. For a simply-laced cluster quiver, the nonzero principal minors of order two must be \[ \begin{vmatrix} 0&1\\-1&0 \end{vmatrix}\quad or \quad \begin{vmatrix} 0&-1\\1&0 \end{vmatrix}. \] And there is a bijection between the nonzero principal minors of order two and arrows in $Q_1$. Then the last statement follows.
\end{proof}
Let $Q$ be a cluster quiver and $B=B(Q)$. Given the matrix $W$ at $k$ satisfying the equality (\ref{pmatrix}), we have $\mu_k(B) = WBW^T$.
\begin{Proposition}
Let $Q$ be a cluster quiver without $3$-cycles and $Q_0=\{1,2,\dots,n\}$. Fix a vertex $k\in Q_0$, $B=B(Q)=(b_{ij})_{n\times n}$ and $\mu_k(B) = WBW^T$, where $W$ satisfies the equality (\ref{pmatrix}). The following statements are equivalent:
\begin{enumerate}
\item[(i)] $Q$ and $\mu_k(Q)$ are cospectral;
\item[(ii)] $k$ is either a sink or a source;
\item[(iii)] $ W= \begin{bmatrix}
  I_{k-1}&\varepsilon\zeta&\mathbf{0}\\
 \mathbf{0}&-1&\mathbf{0}\\
 \mathbf{0}&\varepsilon\theta&I_{n-k}
  \end{bmatrix}$ \; where  $\zeta=(b_{1,k},b_{2,k},\dots,b_{k-1,k})^T$, $\theta=(b_{k+1,k},b_{k+2,k},\dots,b_{n,k})^T$, and $\varepsilon \in \{0,1\}$.
\end{enumerate}
\end{Proposition}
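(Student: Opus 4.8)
The plan is to prove the cycle of implications $\mathrm{(ii)}\Rightarrow\mathrm{(i)}$, $\mathrm{(ii)}\Leftrightarrow\mathrm{(iii)}$ and $\mathrm{(i)}\Rightarrow\mathrm{(ii)}$. The first is nothing but Lemma 4.2. For $\mathrm{(ii)}\Leftrightarrow\mathrm{(iii)}$ I would simply compare the matrix displayed in $\mathrm{(iii)}$ with the matrix $W$ appearing in \eqref{pmatrix}: condition $\mathrm{(iii)}$ requires that the vector $([b_{1,k}]_+,\dots,[b_{k-1,k}]_+)^T$ occurring in $W$ equal $\varepsilon(b_{1,k},\dots,b_{k-1,k})^T$, and likewise for the lower block. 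If $\varepsilon=0$ this means $b_{ik}\le 0$ for every $i$, so no arrow enters $k$ and $k$ is a source; if $\varepsilon=1$ it means $b_{ik}\ge 0$ for every $i$, so $k$ is a sink. Conversely, if $k$ is a source all the positive parts $[b_{ik}]_+$ vanish and $\mathrm{(iii)}$ holds with $\varepsilon=0$; if $k$ is a sink then $[b_{ik}]_+=b_{ik}$ for all $i$ and $\mathrm{(iii)}$ holds with $\varepsilon=1$. So the substance is entirely in $\mathrm{(i)}\Rightarrow\mathrm{(ii)}$.

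For $\mathrm{(i)}\Rightarrow\mathrm{(ii)}$, the decisive invariant I would use is the coefficient $b_2$ of the exchange polynomial, which by Lemma 4.4(ii) equals $\sum_{i<j}b_{ij}^2$. Cospectrality forces $b_2(Q)=b_2(\mu_k(Q))$, so it is enough to show that, for a cluster quiver without $3$-cycles, this equality already forces $k$ to be a sink or a source. Writing $\mu_k(B)=(b'_{ij})$ and $c_{ij}=b'_{ij}-b_{ij}=\mathrm{sgn}(b_{ik})[b_{ik}b_{kj}]_+$ for $i,j\ne k$, and observing that the entries in row or column $k$ only change sign under mutation, one gets
\[
b_2(\mu_k(Q))-b_2(Q)=\sum_{\substack{i<j\\ i,j\ne k}}\bigl((b'_{ij})^2-b_{ij}^2\bigr)=\sum_{\substack{i<j\\ i,j\ne k}}\bigl(2b_{ij}c_{ij}+c_{ij}^2\bigr).
\]
The task then reduces to showing each summand is nonnegative, and strictly positive whenever $c_{ij}\ne 0$.

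This positivity check is the step where the no-$3$-cycle hypothesis is used, and I expect it to be the only delicate point. One has $c_{ij}\ne 0$ exactly when $Q$ contains a $2$-path through $k$ between $i$ and $j$: either $i\to k\to j$, in which case $c_{ij}=b_{ik}b_{kj}>0$ and the absence of the $3$-cycle $i\to k\to j\to i$ forces $b_{ij}\ge 0$; or $j\to k\to i$, in which case $c_{ij}<0$ and the absence of the $3$-cycle $j\to k\to i\to j$ forces $b_{ij}\le 0$. In either case $b_{ij}$ and $c_{ij}$ have the same sign (allowing $b_{ij}=0$), so $2b_{ij}c_{ij}+c_{ij}^2>0$; and if $c_{ij}=0$ the summand is $0$. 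Hence the difference above is a sum of nonnegative terms, equal to $0$ if and only if $c_{ij}=0$ for all $i,j\ne k$, equivalently no $2$-path runs through $k$, equivalently every arrow at $k$ is oriented the same way with respect to $k$, i.e. $k$ is a sink or a source. Combined with Lemma 4.4(ii) this finishes $\mathrm{(i)}\Rightarrow\mathrm{(ii)}$ and hence the proposition.
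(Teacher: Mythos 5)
Your proposal is correct and follows essentially the same route as the paper: (ii)$\Rightarrow$(i) via Lemma 4.2, (ii)$\Leftrightarrow$(iii) by direct comparison with \eqref{pmatrix}, and (i)$\Rightarrow$(ii) by using the coefficient $b_2=\sum_{i<j}b_{ij}^2$ from Lemma 4.4 together with the no-$3$-cycle hypothesis to rule out cancellation. The only difference is presentational: where the paper asserts in quiver language that mutation at a non-sink, non-source $k$ can only increase arrow multiplicities, you verify the same monotonicity explicitly through the sign analysis of $2b_{ij}c_{ij}+c_{ij}^2$, which is a slightly more detailed rendering of the identical argument.
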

\begin{proof} (i)$\Rightarrow$(ii):\; Suppose that $Q$ and $\mu_k(Q)$ share the same exchange polynomial, but $k$ is neither a sink nor a source in $Q$. Since $Q$ does not have $3$-cycles, by the definition of mutation of cluster quivers, when we mutate $Q$ at $k$, the multiplicities of arrows between any two vertices either increase or keep intact. And because $k$ is not a sink or source, there precisely exists some arrow whose multiplicity increases. Then the sum of all of principal minors of order two will changed after mutating at $k$, thus by Lemma 4.4 the exchange polynomial will changed, which is a contradiction.

(ii)$\Rightarrow$(i):\; It follows from Lemma 4.2.

(ii)$\Leftrightarrow$(iii):\; $k\in Q_0$ is a source if and only if $b_{ik}\leqslant 0$ for any $i\in Q_0$; and $k$ is a sink if and only if $b_{jk}\geqslant 0$ for any $j\in Q_0$. Then it follows through comparing the definition of $W$ in (\ref{pmatrix}) and (iii).
\end{proof}

The following conjecture asserts that cospectral cluster quivers form a finite connected subgraph of the exchange graph(see \cite{FZ4}), and one of them can be obtained by mutation at sinks and sources from the other.
\begin{Conjecture}
Let $Q$ be a cluster quiver. Then $Q',Q'' \in Mut(Q)$ are cospectral if and only if there exists a cluster quiver $R \in Mut(Q)$ such that $Q'$ and $R$ are isomorphic and $Q''$ can be obtained from $R$ by mutation at sinks and sources.
\end{Conjecture}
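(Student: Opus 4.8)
The plan is to dispatch the two implications in turn; the ``if'' part is routine and the ``only if'' part is where everything interesting happens.

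For the ``if'' direction, suppose $Q''$ is obtained from a quiver $R$ with $R\cong Q'$ by a sequence of mutations, each carried out at a vertex that is a sink or a source of the quiver present at that stage. By Lemma 4.2 every such mutation preserves the exchange polynomial --- it amounts to conjugating the exchange matrix by the sign matrix $J_k$ --- so $R$ and $Q''$ are cospectral, and since isomorphic quivers have the same exchange polynomial, $Q'$ and $Q''$ are cospectral. I would point out that this implication needs neither the hypothesis that $Q',Q''$ lie in one mutation class nor any condition on $3$-cycles.

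For the ``only if'' direction, assume $Q',Q''\in Mut(Q)$ are cospectral. I would first read off the rigid invariants of the common exchange polynomial: by Lemma 4.4 the two quivers share the coefficient $b_2=\sum_{i<j}b_{ij}^2$, that is, the weighted arrow count, and of course they share the whole multiset of eigenvalues $\lambda_1{\rm i},\dots,\lambda_n{\rm i}$. The organizing principle is Proposition 4.5: for a cluster quiver without $3$-cycles a mutation $\mu_k$ preserves the exchange polynomial exactly when $k$ is a sink or a source, and in all other cases $b_2$ strictly increases, because a new arrow is created and, in the absence of $3$-cycles, nothing can cancel it. The task then becomes to rectify an arbitrary mutation sequence $Q'=P_0\to P_1\to\cdots\to P_m=Q''$ into a sequence of mutations at sinks and sources (up to a final isomorphism), which I would attempt by induction on its length together with local ``commutation'' lemmas that move a badly placed mutation past the ones that follow it.

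A clean model case, which I would treat first, is the one in which $Mut(Q)$ contains acyclic quivers: there, sink/source moves let one transport a cospectral representative to acyclic form, after which one invokes the classical fact that a sequence of source-to-sink flips can connect any two acyclic orientations of a fixed underlying valued graph. This reduces matters to showing that two cospectral acyclic quivers in $Mut(Q)$ have the same underlying valued graph, for which I would combine Lemma 4.4 with the interlacing results of Section 3 applied to full valued subquivers. The hard part, and the genuine obstacle, is the presence of $3$-cycles in the general case: Proposition 4.5 is silent then, and along a general path $b_2$ can decrease only through $2$-cycle cancellations that are themselves caused by $3$-cycles, so a bare monotonicity argument in $b_2$ cannot close. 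What is really needed is a local-confluence (``diamond'') statement --- that the set of quivers in $Mut(Q)$ cospectral to $Q'$ is closed and connected under sink/source flips, and that any excursion through a mutation at a vertex that is neither a sink nor a source can be commuted past or cancelled against the sink/source moves that follow it. Carrying out this commutation by a case analysis of how $\mu_k$ interacts with later mutations at sinks and sources is where I expect the real work to lie; granting it, the induction above finishes the proof.
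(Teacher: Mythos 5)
The statement you are asked to prove is stated in the paper as Conjecture 4.6; the authors offer no proof of it, and your proposal does not supply one either. Your ``if'' direction is fine: mutation at a sink or source conjugates the exchange matrix by $J_k$ (Lemma 4.2), isomorphism of quivers is permutation similarity of exchange matrices, and so $Q'$ and $Q''$ are cospectral. But that is the trivial half. For the ``only if'' direction you correctly identify the obstruction --- Proposition 4.5 controls a single mutation only in the absence of $3$-cycles, and along a general mutation path the coefficient $b_2$ is not monotone because $2$-cycle cancellations (which require $3$-cycles) can destroy arrows --- and then you explicitly defer the resolution to an unproven ``local confluence / diamond'' claim, writing ``granting it, the induction above finishes the proof.'' That granted claim \emph{is} the conjecture, essentially restated: asserting that the cospectral class of $Q'$ in $Mut(Q)$ is connected under sink/source flips and that any excursion through a non-sink, non-source mutation can be commuted away is precisely the content one would have to establish, and no mechanism for the commutation is given. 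Example 4.3 of the paper already shows the difficulty in miniature: there a mutation at a vertex that is neither a sink nor a source preserves the exchange polynomial, so any rectification procedure must detect and absorb such moves via the isomorphism $R\cong Q'$ allowed in the statement, and nothing in your sketch explains how.

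Two smaller cautions on the parts you do sketch. First, your ``clean model case'' invokes ``the classical fact that a sequence of source-to-sink flips can connect any two acyclic orientations of a fixed underlying valued graph''; this is true for trees but false for general underlying graphs (acyclic orientations of a cycle with different numbers of clockwise arrows are not so connected, and need not even be mutation equivalent), so you would need the sharper known result that two \emph{mutation-equivalent} acyclic quivers are connected by sink/source mutations, with a citation. Second, even in that model case you still need to show that a cospectral quiver in $Mut(Q)$ can be ``transported to acyclic form'' by sink/source moves alone, which again presupposes the connectivity you have not proved. In short, the proposal establishes only the easy implication; the substantive implication remains open, exactly as the paper's labelling of the statement as a Conjecture indicates.
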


For any seed $\Sigma=(\mathbf{x},\mathbf{y},Q)$ with $Q$ a cluster quiver, let $S(\Sigma) = \bigcup_{\mathbf{x'}\thicksim_c \mathbf{x}}\mathbf{x'}$. We call the subalgebra of the cluster algebra $\mathcal{A}(\Sigma)$ generated by $S(\Sigma)$ a {\bf cospectral subalgebra} corresponding to $\Sigma$, written as $\mathcal{A}_c(\Sigma)$. If $\mathcal{A}(\Sigma)=\mathcal{A}_c(\Sigma)$, we say this cluster algebra $\mathcal{A}(\Sigma)$ to be a {\bf cospectral cluster algebra}.

Clearly, $0\neq \mathbb{ZP}[\mathbf{x}]\subseteqq \mathcal{A}_c(\Sigma)\subseteqq \mathcal{A}(\Sigma)\subseteqq F.$

Let $M(\Sigma)$ denote the set of all seeds mutation equivalent to the seed $\Sigma$. Cospectral relation $\thicksim_c$ for seeds in $M(\Sigma)$ is an equivalence relation whose equivalence class for a seed $\Sigma'$ is denoted by $[\Sigma']$, then we have \[ \mathcal{A}(\Sigma)=\sum_{[\Sigma']\in M(\Sigma)/\thicksim_c} \mathcal{A}_c(\Sigma').\]
\begin{Example} Some examples of cospectral subalgebra are given as follows.
\begin{enumerate}
\item[(i)] Let $\Sigma=(\mathbf{x},\mathbf{y},Q)$ be a seed and $Q$ is a cluster quiver whose underlying graph is $A_3$. Then any cluster quiver in $Mut(Q)$ is either an oriented 3-cycle or a quiver whose underlying graph is $A_3$ (see Lemma 4.11). Since all orientations of $A_3$ are cospectral (see Corollary 3.6) and exchange matrices of oriented 3-cycles are similar for they differ by a permutation, there are exactly two cospectral equivalence classes in $M(\Sigma)$. Let $\Sigma'=(\mathbf{x'},\mathbf{y'},Q')$ be a seed in $M(\Sigma)$ such that $Q'$ is a 3-cycle, then we have $\mathcal{A}(\Sigma) = \mathcal{A}_c(\Sigma) + \mathcal{A}_c(\Sigma')$.
\item[(ii)] Any cluster algebra of rank two associated with a seed whose matrix is skew-symmetric is a cospectral cluster algebra.
\end{enumerate}
\end{Example}

\subsection{Bounds of exchange spectrum radii of cluster quivers}.

In the rest of this section, we consider the bounds of exchange spectrum radii of all cluster quivers in a mutation class. Recall that Fomin and Zelevinsky introduced 2-finite matrices to study finite type classification of cluster algebras, see \cite{FZ2}. For our purposes, we just consider skew-symmetric matrices. For an integer skew-symmetric matrices $B$, $B$ is said to be {\bf 2-finite} if any matrix $B'=(b'_{ij})_{n\times n}$ mutation equivalent to $B$ satisfies that $|b'_{ij}b'_{ji}|\leqslant 3$ for any $i,j \in [1,n]$. Equivalently, any cluster quiver $Q'$ mutation equivalent to the cluster quiver $Q(B)$ is simply-laced.
\begin{Definition}
A valued cluster quiver $Q$ is called {\bf $r$-maximal}$(r > 0)$ if any cluster quiver $Q'$ mutation equivalent to $Q$ has exchange spectrum radius no more than $r$.
\end{Definition}
Note that a cluster quiver is $r$-maximal if and only if so are all of its connected components. It follows from Corollary 3.9 that any full subquiver of a $r$-maximal cluster quiver is $r$-maximal and any cluster quiver contains a full subquiver which is not $r$-maximal is not $r$-maximal.

The following lemmas are well-known.
\begin{Lemma}[\cite{FZ2}]
All orientations of $A_n$ (respectively, $D_n$, $E_6$, $E_7$, or $E_8$) are mutation equivalent.
\end{Lemma}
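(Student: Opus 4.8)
The plan is to reduce the assertion to the purely combinatorial statement that, for any finite tree $T$, any two orientations of $T$ are connected by a finite sequence of mutations performed only at sinks and sources; since the Dynkin diagrams $A_n$, $D_n$, $E_6$, $E_7$, $E_8$ are trees and mutation equivalence is an equivalence relation, this suffices. The first thing I would record is that if $k$ is a sink or a source of a tree cluster quiver $Q$, then there are no $2$-paths $i\rightarrow k\rightarrow j$, so by Definition 4.1 the quiver $\mu_k(Q)$ is obtained from $Q$ simply by reversing all arrows incident with $k$; in particular $\mu_k(Q)$ is again a tree cluster quiver with the same underlying graph, and no new arrows and no $2$-cycles are created. (Note also that every orientation of a tree is automatically acyclic, since the underlying graph has no cycles, so there is no distinction between ``orientations'' and ``acyclic orientations'' here.)

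I would then prove the combinatorial statement by induction on $|T_0|$, the base case being trivial. For the inductive step, pick a leaf $\ell$ of $T$ and let $u$ be its unique neighbour. Given two orientations $Q$ and $Q'$ of $T$, the vertex $\ell$ is a sink or a source in each of them; after possibly replacing $Q$ by $\mu_\ell(Q)$ (a valid sink/source mutation) we may assume $\ell$ is a sink in both $Q$ and $Q'$, so the edge $\{\ell,u\}$ is oriented $u\rightarrow\ell$ in both. Now $Q|_{T\setminus\ell}$ and $Q'|_{T\setminus\ell}$ are orientations of the tree $T\setminus\ell$, and by the inductive hypothesis there is a sequence of mutations $\mu_{w_1},\mu_{w_2},\dots,\mu_{w_m}$, each performed at a sink or source of the current orientation of $T\setminus\ell$, carrying $Q|_{T\setminus\ell}$ to $Q'|_{T\setminus\ell}$.

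The heart of the argument, and the step I expect to need the most care, is lifting this sequence to $T$ while keeping $\ell$ a sink between successive blocks. A mutation $\mu_{w_i}$ with $w_i\neq u$ lifts verbatim: the neighbourhood of $w_i$ in $T$ coincides with its neighbourhood in $T\setminus\ell$, so $w_i$ is a sink or source of $T$ as well, and the arrow $\{\ell,u\}$ is untouched. The problematic case is $w_i=u$, since $u$ need not be a sink or source of $T$ even when it is one of $T\setminus\ell$, owing to the arrow $u\rightarrow\ell$. One checks directly that if $u$ is a sink of $T\setminus\ell$ at that stage, then applying $\mu_\ell$ first reorients $\{\ell,u\}$ to $\ell\rightarrow u$ and makes $u$ a genuine sink of $T$; applying $\mu_u$ then realizes the required mutation on $T\setminus\ell$ and restores $\{\ell,u\}$ to $u\rightarrow\ell$, so $\ell$ is again a sink; the case where $u$ is a source of $T\setminus\ell$ is handled symmetrically by $\mu_\ell\mu_u$. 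Thus each $\mu_{w_i}$ is realized by one or two sink/source mutations of $T$, and after processing the whole sequence the orientation of $T\setminus\ell$ has become $Q'|_{T\setminus\ell}$ while $\{\ell,u\}$ is $u\rightarrow\ell$ exactly as in $Q'$; hence the original $Q$ (before the normalization, which was itself a sink/source mutation) is carried to $Q'$ by sink/source mutations, completing the induction. Applying this with $T=A_n,D_n,E_6,E_7,E_8$ yields the lemma. Alternatively, one may simply invoke the classical theory of Bernstein--Gelfand--Ponomarev reflection functors, which gives the same conclusion.
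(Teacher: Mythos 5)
Your argument is correct. The paper itself gives no proof of this lemma --- it is quoted from \cite{FZ2} without argument --- so the comparison here is with the literature rather than with the text. Your reduction to the purely combinatorial statement that any two orientations of a finite tree are connected by mutations performed only at sinks and sources is the standard route (it is essentially the Bernstein--Gelfand--Ponomarev reflection argument), and the induction on a leaf $\ell$ with unique neighbour $u$, together with the case analysis lifting a mutation at $u$ on $T\setminus\ell$ to a pair of sink/source mutations on $T$, is carried out correctly: since every vertex involved is a sink or a source at the moment it is mutated, no $2$-path is ever present, so each mutation is a pure reversal of incident arrows and the underlying tree is preserved throughout. Two small points of wording: to arrange that $\ell$ is a sink in both orientations you must also allow replacing $Q'$ by $\mu_\ell(Q')$, not only $Q$ by $\mu_\ell(Q)$ (harmless, since $\mu_\ell$ is an involution and itself a sink/source mutation); and in the case where $u$ is a source of the current orientation of $T\setminus\ell$, the vertex $u$ is already a source of $T$ because the edge is oriented $u\rightarrow\ell$, so the auxiliary $\mu_\ell$ is needed only afterwards to restore $\ell$ to a sink, exactly as you indicate. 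Note that your proof yields slightly more than the quoted lemma, namely that all orientations of a Dynkin diagram are connected by sink/source mutations alone; combined with Lemma 4.2 this independently recovers the cospectrality of all orientations of a tree (Corollary 3.6), which is precisely how Lemma 4.9 is used later in the proof of Theorem 4.14.
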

By Lemma 4.9, we  use $Mut(A_n)$ to denote the mutation class of any cluster quivers whose underlying graphs are $A_n$.
\begin{Lemma}[\cite{FZ2}]
Any $2$-finite connected cluster quiver is mutation equivalent to an orientation of a Dynkin diagram.
\end{Lemma}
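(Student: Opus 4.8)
The plan is to follow the combinatorial strategy of Fomin and Zelevinsky \cite{FZ2}, using the spectral results of Section~3 to carry out the final ``recognition of Dynkin type'' step. Two elementary observations are used throughout. First, $2$-finiteness passes to full subquivers: if $Q'$ is a full subquiver of $Q$ on a vertex set $S$ and $k\in S$, then $(\mu_k Q)|_S=\mu_k(Q')$, since the mutation rule for an entry $b_{ij}$ with $i,j\in S$ involves only entries indexed by $S$; iterating, every cluster quiver mutation equivalent to $Q'$ arises as the restriction to $S$ of a cluster quiver mutation equivalent to $Q$, so if $Q$ is $2$-finite then so is $Q'$ (this is the quiver counterpart of Lemma~2.3). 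Second, by Corollary~3.9 the exchange spectrum radius is monotone under passing to full subquivers, and by Proposition~3.5 it agrees with the adjacency spectrum radius of the underlying graph as soon as that graph is a tree.

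With these in hand I would argue by induction on $n=|Q_0|$. For $n\leqslant 2$ a connected $2$-finite cluster quiver is forced, by the bound $b_{12}^2\leqslant 3$, to be an orientation of $A_1$ or $A_2$. For $n\geqslant 3$, the first observation shows that every proper connected full subquiver of $Q$ is $2$-finite, hence by induction mutation equivalent to an orientation of a Dynkin diagram; in particular no full subquiver of $Q$ can be one of the \emph{minimal $2$-infinite} cluster quivers. The core structural input, proved in \cite{FZ2}, is the classification of those minimal obstructions: besides a short explicit list of small quivers (several of which carry a double arrow and are detected at once through the coefficient $b_2$ of Lemma~4.4), they are precisely the orientations of the extended Dynkin graphs $\widehat A_n,\widehat D_n,\widehat E_6,\widehat E_7,\widehat E_8$; equivalently, every extended Dynkin cluster quiver is $2$-infinite. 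Granting this, the structural analysis of \cite{FZ2} (exploiting that all proper full subquivers of $Q$ are of Dynkin mutation type and that $Q$ contains no obstruction) produces a quiver $Q_1\sim Q$ that is acyclic. Moreover $\overline{Q_1}$ must be a tree: if it contained a cycle we could pass to a chordless $k$-cycle with $k\geqslant 3$, whose orientation inherited from $Q_1$ is acyclic; but an acyclic orientation of a chordless $k$-cycle is $2$-infinite (for $k=3$ a single mutation at the middle vertex already produces an arrow of multiplicity $2$; for $k\geqslant 4$ it is one of the minimal obstructions listed above), contradicting the first observation applied to $Q_1$.

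It then remains to identify the tree $\overline{Q_1}$. Since $Q_1$ is a tree cluster quiver, Proposition~3.5 gives that its exchange spectrum radius equals the adjacency spectrum radius of $\overline{Q_1}$. If that radius were $\geqslant 2$, then by Corollary~3.7(1) the tree $\overline{Q_1}$ would contain some $\widehat D_m,\widehat E_6,\widehat E_7$ or $\widehat E_8$ as an induced subgraph; the corresponding full subquiver of $Q_1$ would be a $2$-finite cluster quiver whose underlying graph is an extended Dynkin tree, contradicting (again) that such quivers are $2$-infinite. Hence the radius is $<2$, so by Corollary~3.7(1)(i) the graph $\overline{Q_1}$ is one of $A_n,D_n,E_6,E_7,E_8$, and $Q\sim Q_1$ is mutation equivalent to an orientation of it, as claimed. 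The genuine obstacle in this outline is the classification of minimal $2$-infinite cluster quivers --- equivalently, the verification that the extended Dynkin quivers, together with the small exceptional list, are exactly the local obstructions to $2$-finiteness --- which is the intricate but finite combinatorial heart of \cite{FZ2}; an alternative route replaces this step by the criterion of Barot, Geiss and Zelevinsky that a simply-laced $2$-finite exchange matrix is precisely one admitting a positive-definite quasi-Cartan companion, after which the classical classification of positive-definite symmetrizable generalized Cartan matrices pins down the type.
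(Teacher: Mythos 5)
The paper does not prove this lemma at all: it is imported verbatim from \cite{FZ2} as a known classification result, so there is no internal argument to compare yours against. Judged on its own terms, your outline has a sensible architecture and the final recognition step is a genuinely nice use of the paper's machinery --- once you know a $2$-finite quiver $Q_1$ is a tree cluster quiver, Proposition~3.5 plus Corollary~3.7(1) does identify $\overline{Q_1}$ as a Dynkin diagram, provided you already know that extended Dynkin tree quivers are $2$-infinite. The restriction-commutes-with-mutation observation and the chordless-cycle elimination are also correct.

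However, the proposal is not a proof, because its two load-bearing steps are both assumed rather than argued, and each is of essentially the same depth as the statement being proved. First, the assertion that ``the structural analysis of \cite{FZ2} produces a quiver $Q_1\sim Q$ that is acyclic'' is not an available intermediate lemma one can cite independently: the existence of an acyclic representative in the mutation class of a $2$-finite quiver is, for all practical purposes, equivalent to the theorem itself (once acyclic, the tree and spectral-radius arguments finish quickly, as you show), and the actual proof in \cite{FZ2} does not proceed by first exhibiting such a representative --- it runs through Cartan counterparts and a long case analysis, of which acyclicity of some representative is a \emph{consequence}. Second, the classification of minimal $2$-infinite cluster quivers (the extended Dynkin orientations together with the small exceptional list), which you explicitly flag as ``the intricate but finite combinatorial heart,'' is exactly the content that a proof of this lemma must supply. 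Deferring both of these to \cite{FZ2} reduces your argument to a reorganization of the citation the paper already makes. The alternative route you mention via quasi-Cartan companions is a legitimate way to close the gap, but it likewise rests on an external classification theorem that you do not reproduce. In short: correct skeleton, genuinely incomplete body.
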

\begin{Lemma}[\cite{BV}]
Let $Mut(A_p)$ be the mutation class of $A_p$. Then the class consists of connected quivers satisfying that:
\begin{enumerate}
\item[(i)] All nontrivial cycles are oriented 3-cycles.
\item[(ii)] The degree of any vertex is less than five.
\item[(iii)] If a vertex has degree four, then two of its adjacent arrows belong to one $3$-cycle, and the other two belong to another $3$-cycle.
\item[(iv)] If a vertex has degree three, then two of its adjacent arrows belong to a $3$-cycle, and the third arrow does not belong to any $3$-cycle.
\end{enumerate}
\end{Lemma}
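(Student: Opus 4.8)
This is the Buan--Vatne description of $Mut(A_p)$, and the cleanest route is the polygon model rather than a direct mutation analysis. Recall (the standard polygon model for cluster algebras of type $A$) that the seeds mutation equivalent to a linearly oriented $A_p$ are in bijection with the triangulations of a convex $(p+3)$-gon $P$, a mutation at a vertex corresponding to the flip of the associated diagonal; the quiver $Q_T$ of a triangulation $T$ has the diagonals of $T$ as vertices, with an arrow $a\to b$ whenever $a$ and $b$ are sides of a common triangle of $T$ and $b$ follows $a$ clockwise in that triangle. Since $P$ is a disk, $Q_T$ has no $2$-cycles and every arrow has multiplicity one. The ``fan'' triangulation from a single vertex has $Q_T$ a path, so it realizes an orientation of $A_p$; as any two triangulations of $P$ are flip-connected, $Mut(A_p)$ is exactly the set of quivers $Q_T$ with $T$ a triangulation of $P$. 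The plan is thus to prove that these $Q_T$ are precisely the connected quivers on $p$ vertices satisfying (i)--(iv).

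For ``$Q_T$ satisfies (i)--(iv)'' I would argue locally at a diagonal $d$ of $T$. Such a $d$ is a side of exactly two triangles $\Delta_1,\Delta_2$, each contributing at most two further diagonal sides, so $\deg(d)\le 4$, giving (ii). If $\deg(d)=4$ then both $\Delta_i$ are internal, so $d$ lies in the oriented $3$-cycle formed by the sides of $\Delta_1$ and in that formed by the sides of $\Delta_2$, and the four arrows at $d$ split $2+2$ between these, giving (iii). If $\deg(d)=3$ then exactly one $\Delta_i$, say $\Delta_1$, is internal, so two arrows at $d$ lie in its $3$-cycle, while the remaining arrow joins $d$ to a diagonal with which it shares only the non-internal triangle $\Delta_2$ and hence lies in no $3$-cycle, giving (iv). For (i) one uses that in a disk a chordless cycle of $Q_T$ must be the boundary of a single internal triangle, so every nontrivial cycle is an oriented $3$-cycle.

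Conversely, I would show by induction on $p$ that a connected quiver $Q$ on $p$ vertices satisfying (i)--(iv) is some $Q_T$, whence $Q\in Mut(A_p)$ by flip-connectivity together with Lemma 4.9. If $Q$ is acyclic, then (iii) and (iv) forbid vertices of degree $\ge 3$, so $Q$ is a path, i.e.\ an orientation of $A_p$. If $Q$ has a $3$-cycle, the key point is that $Q$ then possesses a \emph{removable ear}: a vertex $v$ that is either of degree $1$, or of degree $2$ and lying on a $3$-cycle; in the latter case (i)--(iv) further force the other two vertices of that cycle to be joined by an arrow and to share no other $3$-cycle with $v$'s neighbours. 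Deleting such a $v$ yields a connected quiver on $p-1$ vertices still satisfying (i)--(iv); by induction it equals $Q_{T'}$ for a triangulation $T'$ of the $(p+2)$-gon, and $v$ is reinserted by subdividing the appropriate boundary edge of $T'$ (legitimate precisely because of the structural constraints just noted), yielding $T$ with $Q_T=Q$. I expect the real work to be the \emph{existence of a removable ear}: one shows from (i) that the $3$-cycles of $Q$ are pairwise edge-disjoint and that, with the pendant paths allowed by (iii)--(iv), their incidence pattern is tree-like, so a leaf $3$-cycle or a terminal pendant path must end in a vertex of the required type. This finite combinatorial case check is the only delicate step.
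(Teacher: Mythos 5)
The paper does not prove this lemma at all --- it is quoted verbatim from Buan--Vatne \cite{BV} --- so there is no in-text argument to compare against; Buan and Vatne work directly with quiver mutations, whereas you route everything through the triangulation model of the $(p+3)$-gon, which is a legitimate and arguably more conceptual choice. Your forward direction is mostly sound (degree $\leqslant 4$ from the two triangles adjacent to a diagonal, and the $2{+}2$ versus $2{+}1$ splitting for degrees $4$ and $3$), but there is a genuine gap in how you obtain (i). Knowing that every \emph{chordless} cycle of $Q_T$ bounds a single internal triangle does not imply that every nontrivial cycle is an oriented $3$-cycle: in $K_4$ every chordless cycle is a triangle, yet there are $4$-cycles, and condition (i) --- where, as the paper notes, a nontrivial cycle means any non-vertex-repeating cycle of the underlying graph, chords allowed --- must exclude such cycles. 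To close this you need the additional geometric facts that two triangles of $T$ share at most one diagonal (so distinct $3$-cycles of $Q_T$ are arrow-disjoint and meet in at most one vertex) and that their incidence pattern is the dual tree of $T$; only then does a closed non-repeating walk collapse onto a single internal triangle. You invoke exactly this edge-disjointness and tree-likeness later, but only inside the converse direction, while it is already indispensable here.

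The converse is where the substance of the lemma lives, and you have only gestured at it: the existence of a removable ear, the verification that deleting it preserves connectivity and conditions (i)--(iv), and the check that the deleted vertex can be re-attached by subdividing a boundary edge of the smaller polygon (which requires reading off from (iii)--(iv) in $Q$, not just in $Q\setminus\{v\}$, that the appropriate triangle of $T'$ has a boundary side) together constitute essentially the whole proof. Your own remark that you ``expect the real work'' to be there is an accurate self-diagnosis rather than an argument; the existence of an ear needs a block-decomposition argument (a leaf block of the block tree is either a bridge ending in a degree-$1$ vertex or a $3$-cycle with at least two degree-$2$ vertices). None of this is unfixable, and for the paper's actual use of the lemma (Theorem 4.14 only needs $p=4,5$) the case analysis is tiny, but as written the proposal is a correct strategy with its two hardest steps left open.
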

Note that a cycle in the first condition means a cycle in the underlying graph, not passing through the same vertex twice.

The following lemma is a simple observation for the case of cluster quivers whose underlying graphs contain  no 4-cycles.
\begin{Lemma}
Let $B(Q)$ be the exchange matrix of a simply-laced cluster quiver $Q$ with $|Q_0|\geqslant 4$. If the underlying graph $\bar Q$ of $Q$ contains no 4-cycles, then the sum of all principal minors of $B(Q)$ of order four equals to the number of pairs of disadjacent arrows (i.e. without common vertices).
\end{Lemma}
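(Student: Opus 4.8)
The plan is to evaluate each order-four principal minor of $B(Q)$ in closed form via the Pfaffian and then add them up. First I would invoke the lemma identifying principal submatrices of $B(Q)$ with full subquivers: an order-four principal minor is $\det B(Q')$, where $Q'$ is the full subquiver of $Q$ on a four-element vertex set $\{i,j,k,l\}$ with $i<j<k<l$, and since $Q$ is simply-laced, $B(Q')$ is a $4\times4$ skew-symmetric matrix with all entries in $\{-1,0,1\}$. For a $4\times4$ skew-symmetric matrix the determinant is the square of the Pfaffian, so
\[ \det B(Q') = \bigl(b_{ij}b_{kl} - b_{ik}b_{jl} + b_{il}b_{jk}\bigr)^2. \]

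Next I would expand this square: it has three ``diagonal'' terms $b_{ij}^2b_{kl}^2$, $b_{ik}^2b_{jl}^2$, $b_{il}^2b_{jk}^2$, and three ``cross'' terms carrying a coefficient $\pm2$, namely $-2b_{ij}b_{kl}b_{ik}b_{jl}$, $+2b_{ij}b_{kl}b_{il}b_{jk}$, $-2b_{ik}b_{jl}b_{il}b_{jk}$. The key observation is that a cross term is nonzero only if all four of its factors are nonzero, i.e.\ only if the four corresponding pairs of vertices are all adjacent in $\bar Q$; but in each of the three cases those four edges form a $4$-cycle in $\bar Q$ (for instance $b_{ij}b_{kl}b_{ik}b_{jl}\ne0$ forces the edges $ij,jl,lk,ki$ to form a $4$-cycle), contradicting the hypothesis. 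Hence every cross term vanishes and the minor reduces to $b_{ij}^2b_{kl}^2+b_{ik}^2b_{jl}^2+b_{il}^2b_{jk}^2$. Since the entries lie in $\{-1,0,1\}$, $b_{pq}^2$ equals $1$ exactly when there is an arrow between $p$ and $q$, so this minor counts the ways to partition $\{i,j,k,l\}$ into two pairs each joined by an arrow of $Q$ --- equivalently, the number of (unordered) pairs of vertex-disjoint arrows of $Q$ whose endpoints are exactly $\{i,j,k,l\}$.

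Finally I would sum over all four-element subsets of $Q_0$. Every pair of disadjacent arrows of $Q$ occupies exactly four vertices, hence determines a unique four-subset, and it contributes $1$ to exactly one diagonal term of the minor attached to that subset; conversely, each diagonal term equal to $1$ yields such a pair. This sets up a bijection between pairs of disadjacent arrows and the $1$'s appearing among the diagonal terms over all subsets, so the total sum of the order-four principal minors equals the number of pairs of disadjacent arrows. I expect the only genuine work to be the cross-term step: one must check, for each of the three cross terms, that the four edges involved form a $4$-cycle of $\bar Q$, a short but careful adjacency bookkeeping; the rest is organizing the Pfaffian expansion and the counting.
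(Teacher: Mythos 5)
Your proof is correct and follows essentially the same route as the paper's: both expand each order-four principal minor, use the absence of $4$-cycles in $\bar Q$ to kill exactly those terms whose four entries would trace out a $4$-cycle, and identify the surviving terms with pairs of vertex-disjoint arrows. The only difference is cosmetic --- you organize the expansion via $\det B(Q') = \mathrm{Pf}(B(Q'))^2$, whereas the paper works directly with the Leibniz sum over derangements of $S_4$ (your three cross terms are precisely its six $4$-cycle permutations, paired up), and if anything your version is slightly more explicit about why only double transpositions survive.
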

\begin{proof} The principal minor of $B(Q)$ of order four equals to the determinant of the exchange matrix of its corresponding full subquiver. Let us compute the determinant of the exchange matrix $R=(r_{ij})_{4\times 4}$ of a full subquiver $Q'$ of order four. Write $det(R) = \sum_{\pi}\mathrm{sgn}(\pi)r_{1\pi 1}r_{2\pi 2}r_{3\pi 3}r_{4\pi 4}$.

 Since the underlying graph $\bar Q$ of $Q$ contains no 4-cycles, so does the underlying graph of $Q'$. If the term $\mathrm{sgn}(\pi)r_{1\pi 1}r_{2\pi 2}r_{3\pi 3}r_{4\pi 4}$ is not zero, $\pi$ must be a composition of two disjoint 2-cycles and $\mathrm{sgn}(\pi)r_{1\pi 1}r_{2\pi 2}r_{3\pi 3}r_{4\pi 4}=1$. Since each nonzero term corresponds to a pair of disadjacent arrows in a full subquiver of order $4$ in $Q$, thus the sum of all principal minors of $B(Q)$ of order four equals to the number of pairs of disadjacent arrows in $Q$.
\end{proof}

\begin{Remark}
Let $Q$ be a cluster quiver of order 4, and the underlying graph of $Q$ contains no 4-cycles. It is easy to see the determinant of $B(Q)$ does not depend on the orientations of $\bar Q$ from the proof of Lemma 4.12. Therefore its exchange polynomial just depends on its underlying graph. Since it follows from Lemma 4.4, the exchange polynomial of any valued cluster quiver of order 3 does not depend on the orientations, then it is easy to compute the exchange polynomials of cluster quivers of order less than 5.
\end{Remark}

\begin{Theorem}
A connected cluster quiver is 2-maximal if and only if it is mutation equivalent to an orientation of one of $X_2$, $A_1$, $A_2$, $A_3$, or $A_4$, where $X_2$ is a graph with two vertices and two edges.
\end{Theorem}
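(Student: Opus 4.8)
The plan is to derive both implications from Corollary 3.9 (the exchange spectrum radius does not decrease when passing to a full valued subquiver) together with the remark after Definition 4.8 that $2$-maximality is inherited by full subquivers; note also that $2$-maximality depends only on the mutation class, so by Lemma 4.9 it suffices, for the ``if'' part, to check that $X_2$, $A_1$, $A_2$, $A_3$ and $A_4$ are themselves $2$-maximal. For $A_1$, $A_2$ and $X_2$ this is immediate: on two vertices every mutation is at a sink or source, so $Mut(X_2)$ consists of the two orientations of $X_2$, with exchange polynomial $\lambda^2+4$ and radius $2$, while $Mut(A_2)$ has radius $1$. For $A_3$, the mutation class consists of the orientations of the path (a tree, so by Proposition 3.5 its exchange radius equals the adjacency radius $\sqrt2$ of $A_3$) and the oriented $3$-cycle, whose exchange polynomial is $\lambda^3+3\lambda$ by Lemma 4.4, of radius $\sqrt3$; both are below $2$.

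The remaining and most delicate case of the ``if'' part is $A_4$. Here I would use the Buan--Vatne description of $Mut(A_4)$ (Lemma 4.11): every quiver in it has all nontrivial cycles oriented $3$-cycles; since on four vertices two triangles necessarily share an edge and hence produce a $4$-cycle, and since a degree-three vertex must lie on a $3$-cycle, a short combinatorial check forces the underlying graph to be either the path $A_4$ or a triangle with a single pendant edge. The path is a tree with exchange radius $2\cos(\pi/5)<2$. For the triangle-plus-pendant the triangle is cyclically oriented and the free end of the pendant is a sink or source, so by Lemma 4.2 its orientation is irrelevant, and a direct determinant computation via Lemma 4.4 yields exchange polynomial $\lambda^4+4\lambda^2+1$, with roots $\lambda^2=-2\pm\sqrt3$ and radius $\sqrt{2+\sqrt3}<2$. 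Hence every quiver in $Mut(A_4)$ has radius below $2$, so $A_4$ is $2$-maximal.

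For the ``only if'' part, let $Q$ be connected and $2$-maximal, and distinguish two cases. If $Q$ is not $2$-finite, some $Q''\in Mut(Q)$ has two vertices joined by $m\ge2$ parallel arrows; the full subquiver on those two vertices is $X_m$, with exchange polynomial $\lambda^2+m^2$ and radius $m$, so $m=2$ by $2$-maximality and Corollary 3.9. If $Q''$ had a third vertex, connectedness would give one adjacent to one of these two, and the full subquiver on the three of them would have exchange polynomial $\lambda^3+b_2\lambda$ with $b_2\ge 2^2+1=5$ by Lemma 4.4(ii), hence radius $\ge\sqrt5>2$ --- impossible. So $Q''=X_2$ and $Q\sim X_2$. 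If $Q$ is $2$-finite, then by Lemma 4.10 it is mutation equivalent to an orientation of a Dynkin diagram $\Delta$. When $\Delta$ is $D_n$ $(n\ge4)$, $E_6$, $E_7$ or $E_8$, its branch vertex and its three neighbours span a full subquiver of type $D_4$, and $Mut(D_4)$ contains a quiver of exchange polynomial $\lambda^2(\lambda^2+5)$ (mutate a mixed orientation of $D_4$ at its centre), of radius $\sqrt5>2$; when $\Delta=A_n$ with $n\ge5$, $\Delta$ contains a full subquiver of type $A_5$, and by Lemma 4.11 the bowtie quiver (two cyclically oriented $3$-cycles glued at a common vertex) lies in $Mut(A_5)$ with exchange polynomial $\lambda(\lambda^2+1)(\lambda^2+5)$, again of radius $\sqrt5>2$. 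In either situation $\Delta$, hence $Q$, would fail to be $2$-maximal by the remark after Definition 4.8, so $\Delta\in\{A_1,A_2,A_3,A_4\}$, which finishes the proof.

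The main obstacle I expect is the $A_4$ case of the ``if'' direction: one must extract the exact shape of $Mut(A_4)$ from Lemma 4.11 and then carry out the eigenvalue estimate $\sqrt{2+\sqrt3}<2$. Dually, the crux of the ``only if'' direction is producing the two small non-$2$-maximal witnesses --- the radius-$\sqrt5$ mutation of $D_4$ and the bowtie in $Mut(A_5)$ --- and confirming that their exchange radii exceed $2$; everything else is a routine consequence of the results already established.
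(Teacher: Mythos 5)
Your proposal is correct and follows essentially the same route as the paper: verify the small cases via the Buan--Vatne description of $Mut(A_4)$ (whose two underlying graphs are exactly your path and triangle-plus-pendant), and for the converse rule out multiple arrows and then use $2$-finiteness, Lemma 4.10, and the radius-$\sqrt5$ witnesses in $Mut(D_4)$ and $Mut(A_5)$ (your bowtie is precisely the paper's $\mu_2\mu_4(Q_5)$). The only real difference is a pleasant streamlining: where the paper enumerates five three-vertex multigraphs containing a double edge, you dispose of them all at once via Lemma 4.4(ii) with $b_2\geqslant 5$.
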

\begin{proof} By  Lemma 4.11, the underlying graph of any quiver $Q' \in Mut(A_4)$ must be one of graphs in Figure \ref{G4}. These two underlying graphs do not have $4$-cycles, we may compute the exchange radii by using any orientation of them by Lemma 4.12 and Remark 4.13. In any case, it is not difficult to know the exchange spectrum radius is not more than two.
\begin{figure}
\centering
\includegraphics[height=2cm,width=10cm]{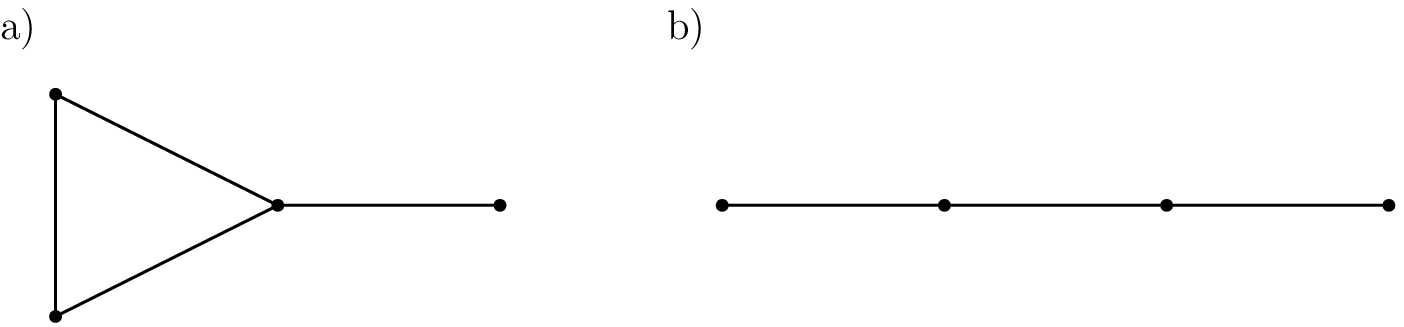}
\caption{}
\label{G4}
\end{figure}

Let $Q$ be a connected $2$-maximal cluster quiver. The multiplicities of arrows must be less than three, otherwise there will be a full subquiver whose exchange spectrum radius is more than two. If there exist arrows, whose multiplicities equal to two in $Q$ and $\bar{Q}$, is not $X_2$, then there exists a full subquiver of $Q$ whose underlying graph is one of the five graphs in Figure \ref{G5}.
\begin{figure}
\centering
\includegraphics[height=4cm,width=10cm]{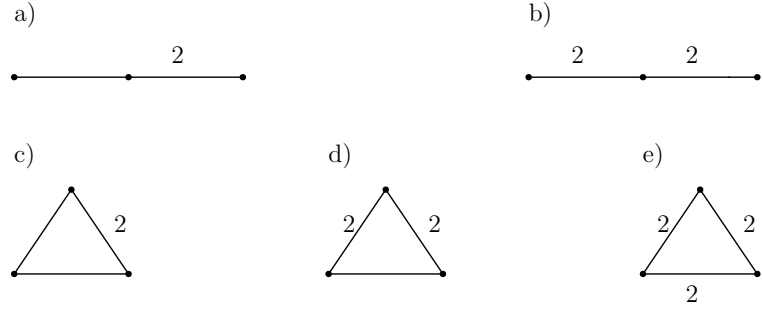}
\caption{The number beside an edge means the multiplicity of the edge}
\label{G5}
\end{figure}
In any case, the exchange spectrum radius of this full valued subquiver is more than 2. Thus $Q$ must be mutation equivalent to an orientation of $X_2$.

Now we suppose that $Q$ is a simply-laced $2$-maximal cluster quiver. Since $Q$ is 2-maximal and connected, any quiver $Q'\sim Q$ must be a simply-laced cluster quiver. Hence $Q$ is 2-finite. By Lemma 4.10, $Q$ is mutation equivalent to an orientation of one of Dinkin diagrams. Since all orientations of a Dynkin diagram are mutation equivalent and share the same exchange polynomial by Lemma 4.9 and Corollary 3.6, respectively. Let us consider the cluster quiver $Q_4$ in Figure \ref{Q4}(a) whose underlying graph is $D_4$.
\begin{figure}
\centering
\includegraphics[height=2.3cm,width=10cm]{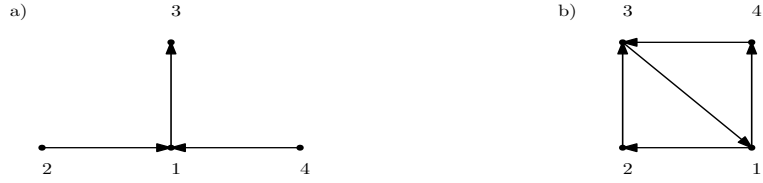}
\caption{$Q_4$ and $\mu_1(Q_4)$}
\label{Q4}
\end{figure}
The exchange spectrum radius of $\mu_1(Q_4)$(see Figure \ref{Q4}(b)) is $\sqrt{5}$ which is more than two. Since $D_n(n \geq 4)$, $E_6$, $E_7$, and $E_8$ contains $D_4$ as an induced subgraph, it follows that $Q$ cannot be mutation equivalent to any orientation of one of $D_n(n \geq 4)$, $E_6$, $E_7$, or $E_8$.

Finally we consider the quiver $Q_5$ in Figure \ref{Q5}(a) whose underlying graph is $A_5$.
\begin{figure}
\centering
\includegraphics[height=2.3cm,width=12cm]{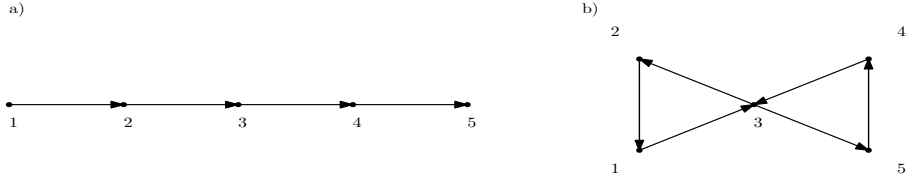}
\caption{$Q_5$ and $\mu_2\mu_4(Q_5)$}
\label{Q5}
\end{figure}
Mutate the quiver at the vertex 2 after mutating at the vertex 4, we get a quiver $\mu_2\mu_4(Q_5)$(see Figure \ref{Q5}(b)) whose exchange spectrum radius is $\sqrt{5}$. In summary, we prove the conclusion.
\end{proof}

We recall preprojective algebras following from \cite{DR}. Let $Q$ be a cluster quiver and \~{Q} be a quiver obtained from $Q$ by adjoining an arrow $\sigma(\alpha): j\rightarrow i$ for each arrow $\alpha : j\rightarrow i$. The preprojective algebra $\Theta(Q)$ of $Q$ is the quotient of the path algebra of \~{Q} modulo the ideal generated by the elements \[ \sum_{t(\beta)=i}\sigma(\beta)\beta,\quad i \in \tilde{Q}_0. \]Then we have the following result.

\begin{Corollary}
Let $Q$ be a cluster quiver whose underlying graph is one of Dynkin diagrams. Then the preprojective algebra $\Theta(Q)$ of $Q$ is representation-finite if and only if $Q$ is 2-maximal.
\end{Corollary}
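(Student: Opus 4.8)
The plan is to combine Theorem 4.14 with the classical classification of the representation type of preprojective algebras of Dynkin type. First I would observe that $\Theta(Q)$ depends only on the underlying graph $\bar Q$, not on the orientation of $Q$: this is a standard feature of preprojective algebras (see \cite{DR}), so $\Theta(Q)$ is isomorphic to the usual preprojective algebra of the Dynkin diagram $\bar Q$. Since every Dynkin diagram is connected, $Q$ is connected and Theorem 4.14 applies to it.

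Next I would invoke the known fact (see \cite{DR}) that the preprojective algebra of a simply-laced Dynkin diagram $\Delta$ is representation-finite precisely when $\Delta$ is one of $A_1$, $A_2$, $A_3$, $A_4$, and is representation-infinite when $\Delta$ is $A_n$ with $n\geqslant 5$ or one of $D_n$ $(n\geqslant 4)$, $E_6$, $E_7$, $E_8$. Hence $\Theta(Q)$ is representation-finite if and only if $\bar Q\in\{A_1,A_2,A_3,A_4\}$.

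It then remains to show that, when $\bar Q$ is a Dynkin diagram, $Q$ is $2$-maximal if and only if $\bar Q\in\{A_1,A_2,A_3,A_4\}$. If $\bar Q=A_m$ with $m\leqslant 4$, then $Q$ is an orientation of $A_m$ and hence $2$-maximal by Theorem 4.14. Conversely, if $Q$ is $2$-maximal then, by Theorem 4.14, $Q$ is mutation equivalent to an orientation of $X_2$, $A_1$, $A_2$, $A_3$, or $A_4$. Because $\bar Q$ is Dynkin, $Q$ is $2$-finite (see \cite{FZ2}), so every quiver mutation equivalent to $Q$ is simply-laced; in particular $Q$ is not mutation equivalent to an orientation of $X_2$, which has a double edge. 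Thus $Q$ lies in $Mut(A_m)$ for some $m\leqslant 4$. But $\bar Q$ is a tree, and by Lemma 4.11 every vertex of degree three in a quiver of $Mut(A_m)$ has two of its three incident arrows contained in an oriented $3$-cycle, which is impossible in an orientation of a tree; hence a tree in $Mut(A_m)$ has maximal degree two, i.e. is a path on $m+1$ vertices, so $\bar Q=A_m$. Combining the three steps proves the corollary.

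The only nontrivial external ingredient is the representation-type classification of preprojective algebras of Dynkin type, which I would take as known from \cite{DR}; everything else is the orientation-independence of $\Theta(Q)$ and a bookkeeping argument against Theorem 4.14 and Lemma 4.11, so I do not anticipate a genuine obstacle beyond presenting that bookkeeping cleanly.
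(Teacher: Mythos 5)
Your proposal is correct and takes essentially the same route as the paper: cite the classification from \cite{DR} that $\Theta(Q)$ is representation-finite exactly when $\bar Q$ is $A_1$, $A_2$, $A_3$ or $A_4$, and then match this against Theorem 4.14 --- the paper does this in two lines, while you fill in the bookkeeping (orientation-independence of $\Theta(Q)$, ruling out $X_2$ via $2$-finiteness, and using Lemma 4.11 to see that the only tree underlying a quiver in $Mut(A_m)$ is $A_m$ itself). The only blemish is the off-by-one ``path on $m+1$ vertices,'' which should be $m$ vertices since $A_m$ has $m$ vertices; the conclusion $\bar Q=A_m$ is unaffected.
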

\begin{proof}
It follows from \cite{DR} that $\Theta(Q)$ is representation-finite if and only if $\bar Q$ is of type $A_1$, $A_2$, $A_3$ or $A_4$. Thus the conclusion follows from Theorem 4.14.
\end{proof}

{\bf Acknowledgements:}\; This project is supported by the National Natural Science Foundation of China (No.11671350 and No.11571173).

\end{document}